\theoremstyle{plain} 
\newtheorem{teorema}{Theorem}[section] 
\newtheorem*{teoremas}{Theorem}
\newtheorem{prop}{Proposition}[section] 
\newtheorem{lemm}[prop]{Lemma}
\newtheorem{coro}{Corollary}[section] 
\theoremstyle{definition}
\newtheorem{rema}{Remark}[section] 
\def\R{\mathbb R}
\begin{document}

\title[On the integral formulas of Crofton and Hurwitz]{On the integral formulas of Crofton and Hurwitz relative to the visual angle of a convex set} 

\author{Juli\`a Cuf\'{\i}, Eduardo Gallego  \&  Agust\'{\i} Revent\'os}

\address{Departament de Matem\`atiques \\ 
Universitat Aut\`{o}noma de Barcelona\\ 
08193 Bella\-terra, Barcelona\\ Catalonia}
\email{jcufi@mat.uab.cat, egallego@mat.uab.cat, agusti@mat.uab.cat}

\thanks{The authors were  partially supported by grants 2017SGR358, 2017SGR1725 (Generalitat de Catalunya) and MTM2015-66165-P (Ministerio de Economía y Competitividad)}

\subjclass{52A10, 53A04}   
\keywords{convex set, isoperimetric inequality,  pedal curve, visual angle}   
\begin{abstract}
We provide a unified approach that encompasses some integral formulas for functions of the visual angle of a compact convex set  due to Crofton, Hurwitz and Masotti. The basic tool is an  integral formula  that also allows us to integrate  new functions  of the visual angle. 
As well we establish some upper and lower bounds for the considered integrals, generalizing in particular those obtained by Santal\'o for Masotti's integral.
\end{abstract}    

\maketitle

\section{Introduction}
In  1868 Crofton showed (\cite{crofton}), using arguments that nowadays belong to  integral geometry, the well known formula
$$
\int_{P\not \in K}(\omega-\sin\omega)\, dP={L^{2}\over 2}-\pi F,
$$
where $K$ is a planar compact convex set of area $F$, $L$ is the length of its boundary and 
$\omega=\omega(P)$ is the \emph{visual angle} of $K$ from the point $P$, that is the angle between the two tangents from $P$ to the boundary of $K$.

Later on, Hurwitz in  1902  in his celebrated paper \cite{Hurwitz1902} considered again the integral of some functions of the visual angle. In particular he gave a new proof of the Crofton formula using the Fourier series of the radius of curvature of the boundary of $K$. He also computed 
\begin{equation}\label{hurwitzsin3}
\int_{P\not\in K}\sin^{3}\omega\, dP=\frac34L^{2}+\frac14\pi^{2}{\gamma}_{2}^{2}
\end{equation} 
as well as  the integral of a family of special functions that enables him to show that the quantities $\gamma_{k}^{2}=\alpha_{k}^{2}+\beta_{k}^{2}$, where $\alpha_{k}$ and $\beta_{k}$ are the Fourier coefficients of the radius of curvature, are invariant with respect to rigid motions of $K$.

In 1955 Masotti (\cite{masotti2}) considered a Crofton-type formula computing  
$$
\int_{P\not \in K}(\omega^{2}-\sin^{2}\omega)\, dP
$$  
in terms of the area of $K$, the length of $\partial K$ and the Fourier coefficients of the radius of curvature of $\partial K$. Santal\'o in 1976 (\cite{santalo}) gave lower and upper bounds for the above integral.
\medskip	

In this paper we provide a unified approach that encompasses the previous results and  allows us to obtain new integral formulas for functions of the visual angle. The basic tool is an  integral formula  given by our first result.

\begin{teoremas}[\ref{fundamental}]
Let $K$ be a compact convex set with boundary of class $C^{2}$ and let~$L$ be the length of $\partial K$. 
Let $c_{k}^{2}=a_{k}^{2}+b_{k}^{2}$ where $a_{k}$, $b_{k}$ are the Fourier coefficients of the support function of $K$.  Then, for every continuous function of the visual angle $f(\omega)$ on $[0,\pi]$ such that
$f(\omega)=O(\omega^{3})$, as $\omega$ tends to zero, one has
\begin{equation*}
\begin{aligned}
&\int_{P\notin K}f(\omega)\,dP\\
&=\left(\int_{0}^{\pi}\frac{f(\omega)(1+\cos\omega)^{2}}{\sin^3\omega}\,d\omega\right)\frac{L^{2}}{2\pi}+\pi\sum_{k\geq 2} \left(\int_{0}^{\pi}\frac{f(\omega)h_{k}(\omega)}{\sin^3\omega}\,d\omega\right)c_{k}^{2},
\end{aligned}
\end{equation*}
where   $h_{k}$, for $k\geq 2$, are the   universal functions given  in \eqref{hdek}.
\end{teoremas}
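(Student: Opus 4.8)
The plan is to parametrize the exterior of $K$ by a pair $(\phi,\omega)$, where $\phi\in[0,2\pi)$ is the angle of the outer normal of $K$ at one of the two points where a tangent line from $P$ touches $\partial K$ and $\omega\in(0,\pi)$ is the visual angle; then to carry out this change of variables in $\int_{P\notin K}f(\omega)\,dP$, integrate out $\phi$ using the Fourier series of the support function, and finally integrate in $\omega$. Throughout I write $p$ for the support function of $K$, $u(\phi)=(\cos\phi,\sin\phi)$, $u'(\phi)=(-\sin\phi,\cos\phi)$, and $\psi=\phi+\pi-\omega$.

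First I would establish the change of variables. The tangent line to $\partial K$ with outer normal $u(\phi)$ and the one with outer normal $u(\psi)$ meet at
\[
P(\phi,\omega)=p(\phi)\,u(\phi)+\frac{p(\psi)+p(\phi)\cos\omega}{\sin\omega}\,u'(\phi),
\]
and the standard fact that from a point $P\notin K$ the two tangent outer normals differ by exactly $\pi-\omega$ (measured in the appropriate sense) shows that $(\phi,\omega)\mapsto P(\phi,\omega)$ is a bijection from $[0,2\pi)\times(0,\pi)$ onto $\R^2\setminus K$, and that it is a diffeomorphism off a set of measure zero, which is all the area formula requires. Writing $P=p(\phi)u+q\,u'$ with $q=q(\phi,\omega)$ the coefficient above and using $u'=du/d\phi$, $u''=-u$, one finds $\partial_\phi P=(p'(\phi)-q)u+(p(\phi)+\partial_\phi q)u'$ and $\partial_\omega P=(\partial_\omega q)\,u'$, so the Jacobian equals $|p'(\phi)-q|\,|\partial_\omega q|$. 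A direct calculation then yields
\[
dP=\frac{A(\phi,\omega)\,B(\phi,\omega)}{\sin^3\omega}\,d\phi\,d\omega,
\]
where $A=p(\phi)\cos\omega+p(\psi)-p'(\phi)\sin\omega$ and $B=p(\phi)+p(\psi)\cos\omega+p'(\psi)\sin\omega$ are affine in $p(\phi),p'(\phi),p(\psi),p'(\psi)$ with trigonometric coefficients in $\omega$, and convexity of $K$ ensures that $A$ and $B$ keep a constant sign with $AB>0$ on the parameter domain, so that $dP$ is a positive measure. Note that the $\sin^3\omega$ in the denominator already matches the statement.

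Next I would integrate in $\phi$. Expanding $AB$ produces nine terms of the form $p(\phi)p(\psi)$, $p'(\phi)p'(\psi)$, $p(\phi)p'(\psi)$, $p(\phi)^2$, $p(\psi)^2$, and so on; integrating each against $d\phi$ over a period needs only the Fourier series $p(\phi)=a_0+\sum_{k\ge1}(a_k\cos k\phi+b_k\sin k\phi)$ together with the classical formulas for $\int_0^{2\pi}p(\phi)p(\phi+\alpha)\,d\phi$ and its analogues with one or two derivatives. Since $\psi-\phi=\pi-\omega$, these produce the factors $(-1)^k\cos k\omega$ and $(-1)^k\sin k\omega$, and collecting everything gives
\[
\int_0^{2\pi}\frac{A\,B}{\sin^3\omega}\,d\phi
=\frac{1}{\sin^3\omega}\Bigl(2\pi a_0^2(1+\cos\omega)^2+\pi\sum_{k\ge1}h_k(\omega)\,c_k^2\Bigr),
\]
where the $h_k$ are exactly the universal functions of \eqref{hdek}; in particular one checks $h_1\equiv0$, which is why the sum begins at $k=2$ and which is consistent with the fact that $c_1$ encodes only a translation of $K$. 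By Cauchy's formula $L=2\pi a_0$, so $2\pi a_0^2=L^2/(2\pi)$, and the last display is precisely the integrand appearing in the theorem.

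Finally I would justify Fubini's theorem (to split off the $\phi$-integration) and the interchange of the $\omega$-integral with the sum over $k$; this is where the hypotheses enter and where the real work lies. The weight $\sin^{-3}\omega$ is singular at $\omega=0$ and at $\omega=\pi$: near $0$ the hypothesis $f(\omega)=O(\omega^3)$ removes the singularity, while near $\pi$ one verifies from the explicit form of $h_k$ that each $h_k$ — and also $(1+\cos\omega)^2$ — vanishes there to order four (with $h_k(\pi-\epsilon)=\tfrac14(k^2-1)^2\epsilon^4+O(\epsilon^6)$), so the integrands are bounded at $\pi$ and $f$ need only be continuous. For the series one needs $\sum_{k\ge2}c_k^2\int_0^\pi|f(\omega)h_k(\omega)|\sin^{-3}\omega\,d\omega<\infty$. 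I would bound $\int_0^\pi|f h_k|\sin^{-3}\omega\,d\omega$ by $Ck^4$: on $[0,\pi/2]$ this follows from $|h_k|\le Ck^2$ and $|f(\omega)|\le M\omega^3$ near $0$; on $[\pi/2,\pi]$ one uses that $h_k$ is a trigonometric polynomial of degree $k+2$ with coefficients $O(k^2)$ vanishing to order four at $\pi$, so that by the iterated Bernstein inequality $|h_k(\pi-\epsilon)|\le Ck^{6}\epsilon^{4}$, which after dividing by $\sin^3\omega$ (comparable to $\epsilon^3$ there) and splitting the integral at $\epsilon=1/k$ yields $O(k^4)$. Then $\sum_{k\ge2}c_k^2\cdot Ck^4<\infty$, because $\partial K\in C^2$ makes the radius of curvature $\rho=p+p''$ lie in $L^2$, whence $\sum(k^2-1)^2c_k^2<\infty$ by Parseval. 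The main obstacle is precisely this last estimate: one must get the power of $k$ in $\int_0^\pi|f h_k|\sin^{-3}\omega\,d\omega$ small enough that the $C^2$ hypothesis alone — rather than a stronger one — already guarantees absolute convergence, so that both interchanges are legitimate and the formula follows.
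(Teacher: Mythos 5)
Your proposal is correct and follows essentially the same route as the paper: the $(\varphi,\omega)$ change of variables with the Crofton area element $dP=\frac{TT_1}{\sin\omega}\,d\varphi\,d\omega$ (your $A,B$ are exactly $T\sin\omega$, $T_1\sin\omega$), integration in $\varphi$ via the Fourier series of the support function yielding the same universal functions $h_k$ with $h_1\equiv 0$, and $L=2\pi a_0$ to produce the $L^2/2\pi$ term. Your closing justification of Fubini and of the interchange of sum and integral (the $O(k^4)$ bound on $\int_0^\pi |f\,h_k|\sin^{-3}\omega\,d\omega$, the fourth-order vanishing $h_k(\pi-\epsilon)=\tfrac14(k^2-1)^2\epsilon^4+O(\epsilon^6)$, and Parseval for $\rho=p+p''\in L^2$) is a correct elaboration of points the paper leaves implicit.
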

Crofton's formula, the integral of the above mentioned special functions considered by Hurwitz and the Masotti integral formula follow directly from Theorem  \ref{fundamental}. Moreover we improve the lower bound  given by Santal\'o for Masotti's integral (see Corollary \ref{masotticor}).

As well the integral \eqref{hurwitzsin3} follows from Theorem \ref{fundamental}. Indeed, a more general result is obtained: we can compute the integral of any power of $\sin\omega$. The corresponding result is

\begin{teoremas}[\ref{teoremasinus}] 
Let $K$ be a compact convex set with boundary of class $C^{2}$ and length $L$. 
Write $c_{k}^{2}=a_{k}^{2}+b_{k}^{2}$ where $a_{k}$, $b_{k}$ are the Fourier coefficients of the support function of $K$. Then
\begin{multline*}
\int_{P\notin K}\sin^{m}\omega\,dP=\frac{\pi \,m!}{2^{m-1}(m-2)\Gamma(\frac{m+1}{2})^2} {L^{2}\over 2\pi}\\ 
+{\pi^{2}\ m!\over 2^{m-1}(m-2)}\sum_{k\geq 2,\, \textrm{even}}{(-1)^{\frac{k}{2}+1}(k^{2}-1)\over \Gamma({m+1+k\over 2})\Gamma({m+1-k\over 2})}c_{k}^{2}.
\end{multline*}
For $m$ odd the index $k$ in the sum runs only from $2$ to $m-1$.
\end{teoremas}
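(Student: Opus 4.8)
The plan is to feed $f(\omega)=\sin^{m}\omega$ into Theorem~\ref{fundamental}. For $m\geq 3$ one has $\sin^{m}\omega=O(\omega^{3})$ as $\omega\to 0$ and $\sin^{m}\omega$ is continuous on $[0,\pi]$, so the hypotheses are satisfied; moreover $\sin^{m}\omega/\sin^{3}\omega=\sin^{m-3}\omega$, so the two integrals occurring in Theorem~\ref{fundamental} become
\begin{equation*}
\int_{0}^{\pi}\sin^{m-3}\omega\,(1+\cos\omega)^{2}\,d\omega
\qquad\text{and}\qquad
\int_{0}^{\pi}\sin^{m-3}\omega\,h_{k}(\omega)\,d\omega .
\end{equation*}
Thus the whole proof reduces to evaluating these two families of elementary definite integrals in closed form and recognizing the outcome in the statement.

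For the coefficient of $L^{2}/2\pi$, write $(1+\cos\omega)^{2}=1+2\cos\omega+\cos^{2}\omega$. The contribution of the $2\cos\omega$ term integrates to zero over $[0,\pi]$, since $\sin^{m-3}\omega\cos\omega=\tfrac{1}{m-2}(\sin^{m-2}\omega)'$, and $1+\cos^{2}\omega=2-\sin^{2}\omega$; hence the coefficient equals $2\int_{0}^{\pi}\sin^{m-3}\omega\,d\omega-\int_{0}^{\pi}\sin^{m-1}\omega\,d\omega$. Both are Wallis integrals, $\int_{0}^{\pi}\sin^{n}\omega\,d\omega=\sqrt{\pi}\,\Gamma(\tfrac{n+1}{2})/\Gamma(\tfrac{n}{2}+1)$, and a short manipulation using the Legendre duplication formula turns the difference into $\pi\,m!/\bigl(2^{m-1}(m-2)\,\Gamma(\tfrac{m+1}{2})^{2}\bigr)$, which is the first term of the theorem.

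For the coefficients of the $c_{k}^{2}$, I would insert the explicit expression~\eqref{hdek} for $h_{k}$. Since $h_{k}$ is a finite trigonometric sum, $\int_{0}^{\pi}\sin^{m-3}\omega\,h_{k}(\omega)\,d\omega$ breaks into finitely many instances of the classical formula
\begin{equation*}
\int_{0}^{\pi}\sin^{p}\omega\,\cos(q\omega)\,d\omega=\frac{\pi\cos(q\pi/2)}{2^{p}}\,\frac{\Gamma(p+1)}{\Gamma(\tfrac{p+q}{2}+1)\,\Gamma(\tfrac{p-q}{2}+1)}\qquad(p>-1,\ q\ \text{an integer}),
\end{equation*}
taken at $p=m-3$. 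The factor $\cos(q\pi/2)$ is responsible both for the vanishing of the odd-$k$ terms and for the sign $(-1)^{k/2+1}$ when $k$ is even. Collecting the several harmonics present in $h_{k}$ and repeatedly applying the reflection and duplication identities for $\Gamma$ to merge the Gamma quotients then produces the factor $k^{2}-1$ and the single quotient $1/\bigl(\Gamma(\tfrac{m+1+k}{2})\Gamma(\tfrac{m+1-k}{2})\bigr)$, giving the second term.

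Finally, the truncation for odd $m$ is automatic: when $m$ is odd and $k$ is even, $\tfrac{m+1-k}{2}$ is an integer, hence a non-positive one once $k\geq m+1$, so $1/\Gamma(\tfrac{m+1-k}{2})=0$ and the sum effectively stops at $k=m-1$. I expect the main obstacle to be precisely the bookkeeping of the previous paragraph --- keeping track of all the harmonics that occur in $h_{k}$, applying the $\sin^{p}\omega\cos(q\omega)$ formula to each, and collapsing the resulting finite sum of Gamma quotients into the compact closed form claimed. The case $m=3$, which must recover Hurwitz's formula~\eqref{hurwitzsin3}, provides a convenient consistency check.
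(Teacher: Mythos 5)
Your proposal is correct, but it follows a genuinely different route from the paper. The paper does not go back to Theorem \ref{fundamental}: it uses the second integral formula \eqref{set23x} of Proposition \ref{propo44}, checks that $[\sin^{m}\omega\,F(\omega)]_{0}^{\pi}=0$, computes the universal constant $M(\sin^{m}\omega)$ by the trick of applying the constant-width case to the unit circle (so the exterior integral becomes a Beta function), and gets the $c_{k}^{2}$-coefficients by summing the conjugate Dirichlet kernel \eqref{des22c}, integrating by parts and quoting the tabulated integrals $I_{m,k}=\int_{0}^{\pi}\sin^{m}\omega\cos(k\omega)\,d\omega$. You instead feed $f=\sin^{m}\omega$ directly into Theorem \ref{fundamental} and evaluate $\int_{0}^{\pi}\sin^{m-3}\omega(1+\cos\omega)^{2}\,d\omega$ and $\int_{0}^{\pi}\sin^{m-3}\omega\,h_{k}(\omega)\,d\omega$ by Wallis integrals and the classical $\int_{0}^{\pi}\sin^{p}\omega\cos(q\omega)\,d\omega$ formula; the analytic input is the same Gamma-function integral as the paper's $I_{m,k}$, only applied at different exponents. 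Your route buys directness: no boundary term, no auxiliary computation on the circle, and the vanishing for odd $k$ together with the sign $(-1)^{k/2+1}$ are visible at once from the factor $\cos(q\pi/2)$; the paper's route buys reuse of the machinery of Proposition \ref{propo44} and never has to expand $h_{k}$ into harmonics. The one step you leave implicit --- collapsing the three harmonics $\cos((k\pm 2)\omega)$, $\cos(k\omega)$ of $h_{k}$ in \eqref{hdek} --- does close, but note that only the recurrence $\Gamma(x+1)=x\Gamma(x)$ is needed there (not reflection or duplication): putting everything over the common denominator $\Gamma\bigl(\tfrac{m+1+k}{2}\bigr)\Gamma\bigl(\tfrac{m+1-k}{2}\bigr)$ reduces the bracket to the polynomial identity
\begin{equation*}
(k+1)^{2}(m+k-1)(m+k-3)+(k-1)^{2}(m-k-1)(m-k-3)+2(k^{2}-3)\bigl((m-1)^{2}-k^{2}\bigr)=4m(m-1)(k^{2}-1),
\end{equation*}
which, together with $m(m-1)(m-3)!=m!/(m-2)$, gives exactly the coefficient in the statement; your treatment of the $L^{2}$-term and of the truncation for odd $m$ via $1/\Gamma\bigl(\tfrac{m+1-k}{2}\bigr)=0$ agrees with the paper.
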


When $K$ is a compact convex set of constant width one has $c_{k}=0$ for $k$ even, so that the integral of $\sin^{m}\omega$ is, in this case,  $L^{2}$ multiplied by a factor that depends only on $m$. 

Next we extend the formulas of Crofton and Masotti by means of the equality
\begin{equation}\label{omegasinusm}
\int_{P\notin K}(\omega^{m}-\sin^{m}\omega)\,dP=-\pi^{m}F+M_{m}\frac{L^{2}}{2\pi}+\pi\sum_{k\geq 2}\beta_{k}c_{k}^{2},
\end{equation}
where the quantities $M_{m}$ and $\beta_{k}$   can be explicitly computed (see section \ref{sectionset}). For instance in the case $m=3$ we get
\begin{equation*}
\begin{split}
\int_{P\notin K}(\omega^{3}-\sin^{3}\omega)\,dP&=-\pi^{3}F+\biggl(12\pi\ln(2)-\frac{3\pi}{2}\biggr)\frac{L^{2}}{2\pi}\\
&\quad+
12\pi^{2}\biggl(\ln(2)-\frac{19}{16}\biggr)c_{2}^{2}-6\pi^{2}\sum_{k\geq 3}\left(\Psi\left(\frac{k+1}{2}\right)+\gamma\right)c_{k}^{2},
\end{split}
\end{equation*}
where $\Psi(x)$ is the digamma function $\Psi(x)=(\ln \Gamma(x))'$, and $\gamma$ is the Euler--Masche\-roni constant. 

Finally since the quantities $\beta_{k}$ appearing in \eqref{omegasinusm} are not easily handled we give upper and lower bounds for $\int_{P\notin K}(\omega^{m}-\sin^{m}\omega)\,dP$ that generalize those given by Santal\'o for $m=2$. More precisely we prove

\begin{teoremas}[\ref{upperbound}]
 Let $K$ be a compact convex set with boundary of class $\mathcal{C}^{2}$, area~$F$ and length of the boundary $L$, and let $\omega=\omega(P)$ be the visual angle from the point~$P$. Then
$$
\int_{P\notin K}(\omega^{m}-\sin^{m}\omega)\,dP\leq -\pi^{m}F+M_{m}\frac{L^{2}}{2\pi},\quad m\geq 1,
$$
where $M_{m}=\displaystyle\int_{0}^{\pi}\frac{(\omega^{m}-\sin^{m}\omega)'}{1-\cos\omega}\,d\omega$. Equality holds only for circles. 
\end{teoremas}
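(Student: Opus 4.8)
The plan is to deduce the inequality from the extension \eqref{omegasinusm} of the formulas of Crofton and Masotti. Put $f(\omega)=\omega^{m}-\sin^{m}\omega$; since $\sin\omega<\omega$ on $(0,\pi]$ we have $f>0$ there, and $f(\omega)=O(\omega^{3})$ as $\omega\to0$ for every $m\ge1$, so \eqref{omegasinusm} applies and gives
\[
\int_{P\notin K}(\omega^{m}-\sin^{m}\omega)\,dP=-\pi^{m}F+M_{m}\frac{L^{2}}{2\pi}+\pi\sum_{k\ge2}\beta_{k}c_{k}^{2}.
\]
Hence the asserted bound is \emph{equivalent} to $\sum_{k\ge2}\beta_{k}c_{k}^{2}\le0$, and since $c_{k}^{2}=a_{k}^{2}+b_{k}^{2}\ge0$ it suffices to prove $\beta_{k}\le0$ for every $k\ge2$. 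If in fact $\beta_{k}<0$ strictly, then equality in the theorem forces $c_{k}=0$ for all $k\ge2$; the support function of $K$ then reduces to $a_{0}+a_{1}\cos\theta+b_{1}\sin\theta$, i.e. $K$ is a disk. (For $m=1$ the identity \eqref{omegasinusm} is Crofton's formula and equality holds for every $K$, so the equality statement is understood for $m\ge2$.)

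The second step is to make $\beta_{k}$ explicit. Applying Theorem \ref{fundamental} to $f$ and comparing with \eqref{omegasinusm} — using $L=2\pi a_{0}$ together with the classical expansion $F=\tfrac{L^{2}}{4\pi}-\tfrac{\pi}{2}\sum_{k\ge2}(k^{2}-1)c_{k}^{2}$ to rewrite the $\pi^{m}F$ term — one obtains
\[
\pi\beta_{k}=\pi\int_{0}^{\pi}\frac{(\omega^{m}-\sin^{m}\omega)\,h_{k}(\omega)}{\sin^{3}\omega}\,d\omega-\frac{\pi^{m+1}}{2}(k^{2}-1).
\]
Splitting the integrand, the $\sin^{m}\omega$ piece is minus the coefficient of $c_{k}^{2}$ in $\int_{P\notin K}\sin^{m}\omega\,dP$, given in closed form by Theorem \ref{teoremasinus} (it vanishes for all $k$ outside the range quoted there; in particular for a set of constant width all of these corrections vanish), while the $\omega^{m}$ piece $\pi\int_{0}^{\pi}\omega^{m}h_{k}(\omega)/\sin^{3}\omega\,d\omega$ is a convergent integral evaluated by the Gamma-integral computations of section \ref{sectionset}. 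Carrying this out, the polynomial term $\tfrac{\pi^{m+1}}{2}(k^{2}-1)$ cancels against a matching term produced by $\int\omega^{m}h_{k}/\sin^{3}$, and $\beta_{k}$ emerges as an explicit combination of values of the digamma function $\Psi$ and of the constants $\ln2,\gamma$; for $m=3$, for instance, one gets $\beta_{k}=-6\pi(\Psi(\tfrac{k+1}{2})+\gamma)$ for $k\ge3$ and $\beta_{2}=12\pi(\ln2-\tfrac{19}{16})$, as quoted in the Introduction.

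It then remains to read off the sign. For $m$ odd only finitely many $k$ (the even ones up to $m-1$) receive a contribution from the $\sin^{m}\omega$ part, and for the rest the expression is, up to a positive factor, $-(\Psi(\tfrac{k+1}{2})+\gamma)$, which is negative because $\Psi$ is strictly increasing on $(0,\infty)$ with $\Psi(1)=-\gamma$, so $\Psi(x)+\gamma>0$ for $x>1$; the finitely many small $k$ (such as $\beta_{2}$) are checked to be negative by estimating the explicit constants. For $m$ even the $\sin^{m}\omega$ correction is present for all even $k$ but decays rapidly, so the same dichotomy applies: beyond a bound depending on $m$ the digamma term dominates and is negative, while the finitely many remaining $\beta_{k}$ are estimated directly (the case $m=2$ being Santal\'o's, which this argument recovers). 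This yields $\beta_{k}<0$ for all $k\ge2$ and $m\ge2$, hence the inequality, with equality precisely for disks. I expect the real obstacle to be the second step: extracting the finite value of $\int_{0}^{\pi}\omega^{m}h_{k}(\omega)/\sin^{3}\omega\,d\omega$ and combining it cleanly with the $\sin^{m}\omega$ term and the isoperimetric correction so that the polynomial parts cancel and the sign-revealing digamma structure appears — keeping track of the parity of $m$ and of the range of $k$ in Theorem \ref{teoremasinus} without sign errors being where care is needed.
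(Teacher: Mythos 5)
Your first step coincides with the paper's strategy: via Proposition \ref{prop42} (equivalently \eqref{ppp}) the theorem is exactly the assertion that $\beta_{k}\leq 0$ for all $k\geq 2$, with strict negativity giving the equality case, and your identity $\pi\beta_{k}=\pi\int_{0}^{\pi}f(\omega)h_{k}(\omega)\sin^{-3}\omega\,d\omega-\tfrac{\pi^{m+1}}{2}(k^{2}-1)$ is a correct reconciliation of Theorem \ref{fundamental} with Proposition \ref{prop42}. The gap is in how you propose to establish $\beta_{k}<0$. The claim that for general $m$ the coefficients $\beta_{k}$ ``emerge as an explicit combination of values of the digamma function and of $\ln 2,\gamma$'' is only true for $m=3$ (and there because $\int_{0}^{\pi}\omega^{2}\cos(j\omega)\,d\omega=2\pi(-1)^{j}/j^{2}$ produces a single harmonic sum). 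For $m\geq 4$ the building blocks $V_{m-1,j}=\int_{0}^{\pi}\omega^{m-1}\cos(j\omega)\,d\omega$ are, by the recurrence $V_{r,j}=\tfrac{r}{j^{2}}\bigl((-1)^{j}\pi^{r-1}-(r-1)V_{r-2,j}\bigr)$, polynomials in $1/j^{2}$ with alternating-sign coefficients involving several powers of $\pi$; summing over $j$ gives mixtures of partial sums of $\sum j^{-1},\sum j^{-3},\dots$ (polygamma-type quantities), and $M(\omega^{m})$ itself requires a Bernoulli-number series. There is no single ``sign-revealing digamma structure'' to read off, and you give no argument for the sign of these combinations. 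For odd $k$ your plan can be repaired cheaply (only the $\omega^{m}$ part contributes and each $V_{m-1,j}$ with $j$ even is $\geq 0$, which is precisely the paper's odd-$k$ argument), but for even $k$ — where the positive quantity $M_{m}$ and the $\sin^{m}\omega$ correction enter — the sign is genuinely delicate, above all for $k=2$.

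Concretely, ``the finitely many remaining $\beta_{k}$ are estimated directly'' is not a proof: the exceptional set and the constants depend on $m$, and there are infinitely many $m$ to handle, so a uniform-in-$m$ argument is indispensable. This is exactly where the paper's proof does its real work: it shows $\beta_{k}-\beta_{k+2}>0$ for even $k$ (using that $\psi=(\omega^{m}-\sin^{m}\omega)''$ is nonnegative and increasing for $m\geq 4$, together with a cancellation argument over the partition $t_{j}=j\pi/(k+1)$ adapted to $\sin((k+1)\omega)$), thereby reducing everything to the single inequality $\beta_{2}\leq 0$, and then proves $\beta_{2}<0$ for all $m\geq 4$ by a careful analysis of $g(\omega)=\tfrac{1}{1-\cos\omega}+2\cos\omega$, its unique root $\zeta$, and the monotone decrease of $\int_{0}^{\pi}\omega^{m-1}g(\omega)\,d\omega$ below $-\pi$. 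Your proposal contains no substitute for either of these two ingredients, so as it stands the inequality is not established for $m\geq 4$ (the cases $m=1,2,3$ you invoke are indeed known). The equality discussion is fine once strict negativity of the $\beta_{k}$ is available, and your caveat about $m=1$ (Crofton's identity, where equality holds for every $K$) is a fair observation about the statement itself.
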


And for the case of constant width we get

\begin{teoremas}[\ref{ctwidth2}]
Let $K$ be a compact convex set of constant width, with boundary of class $\mathcal{C}^{2}$, of area~$F$ and length of the boundary~$L$, and let $\omega=\omega(P)$ be the visual angle from the point~$P$. Then
$$
\int_{P\notin K}(\omega^{m}-\sin^{m}\omega)\,dP\geq -\pi^{m}F+M_{m}\frac{L^{2}}{2\pi}-\frac{\pi^{m-1}}{4}\left(1-\biggl(\frac{3}{4}\biggr)^{m}\right)\Delta\geq 0,
$$
where $\Delta=L^{2}-4\pi F$ is the isoperimetric deficit. The first inequality becomes an equality only for circles.
\end{teoremas}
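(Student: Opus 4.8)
The plan is to build on the identity \eqref{omegasinusm}, with the constants $\beta_{k}$ as computed in section \ref{sectionset}, using two structural facts. A convex body of constant width has $c_{k}=0$ for every even $k$, so only odd indices $k\geq 3$ occur in \eqref{omegasinusm}. And the classical area formula for the support function, $F=\tfrac{L^{2}}{4\pi}-\tfrac{\pi}{2}\sum_{k\geq 2}(k^{2}-1)c_{k}^{2}$, gives the Fourier expansion $\Delta=2\pi^{2}\sum_{k\geq 2}(k^{2}-1)c_{k}^{2}$ of the isoperimetric deficit, which for constant width becomes $\Delta=2\pi^{2}\sum_{k\geq 3,\,\mathrm{odd}}(k^{2}-1)c_{k}^{2}$. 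Substituting, the first inequality of the statement is equivalent to
$$
\sum_{k\geq 3,\,\mathrm{odd}}\left(\beta_{k}+\frac{\pi^{m}}{2}\Bigl(1-\Bigl(\tfrac{3}{4}\Bigr)^{m}\Bigr)(k^{2}-1)\right)c_{k}^{2}\ \geq\ 0,
$$
so, since every $c_{k}^{2}\geq 0$, it is enough to prove the pointwise bound $\beta_{k}\geq-\tfrac{\pi^{m}}{2}\bigl(1-(\tfrac{3}{4})^{m}\bigr)(k^{2}-1)$ for each odd $k\geq 3$.

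Proving this is the heart of the matter and the step I expect to be the main obstacle. Applying Theorem \ref{fundamental} to $f(\omega)=\omega^{m}-\sin^{m}\omega$, together with the integration-by-parts identity $\int_{0}^{\pi}\frac{(\omega^{m}-\sin^{m}\omega)(1+\cos\omega)^{2}}{\sin^{3}\omega}\,d\omega=M_{m}-\tfrac{\pi^{m}}{2}$ and the area formula, one obtains $\beta_{k}=I_{k}-\tfrac{\pi^{m}}{2}(k^{2}-1)$ with $I_{k}=\int_{0}^{\pi}\frac{(\omega^{m}-\sin^{m}\omega)h_{k}(\omega)}{\sin^{3}\omega}\,d\omega$; thus the bound to be proved is exactly $I_{k}\geq\tfrac12(\tfrac{3\pi}{4})^{m}(k^{2}-1)$. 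Since for $m=1$ (Crofton) one has $\beta_{k}=0$, i.e. $I_{k}=\tfrac{\pi}{2}(k^{2}-1)$, the inequality is a comparison of the $(\omega^{m}-\sin^{m}\omega)$-weighted kernel integral with the $(\omega-\sin\omega)$-weighted one. Because the universal kernel $h_{k}/\sin^{3}\omega$ changes sign, this cannot be carried out pointwise; one has to use the explicit expression of $h_{k}$ from \eqref{hdek} to control where its mass concentrates, and the constant $(\tfrac{3}{4})^{m}$ — hence the factor $1-(\tfrac{3}{4})^{m}$ in the theorem — comes out of the extremal configuration of that comparison. (For $m=1$ the bound is vacuous and the first inequality reduces to $\Delta\geq 0$; for $m=3$ it can be checked directly against $\beta_{k}=-6\pi\bigl(\Psi(\tfrac{k+1}{2})+\gamma\bigr)$.)

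For the second inequality I substitute $\Delta=L^{2}-4\pi F$ into the middle member and collect terms: the coefficient of $F$ becomes $-\pi^{m}+\pi^{m}\bigl(1-(\tfrac{3}{4})^{m}\bigr)=-\pi^{m}(\tfrac{3}{4})^{m}$ and that of $L^{2}$ becomes $\tfrac{M_{m}}{2\pi}-\tfrac{\pi^{m-1}}{4}\bigl(1-(\tfrac{3}{4})^{m}\bigr)$. Since the coefficient of $F$ is negative, the isoperimetric inequality $4\pi F\leq L^{2}$ gives
$$
-\pi^{m}F+M_{m}\frac{L^{2}}{2\pi}-\frac{\pi^{m-1}}{4}\Bigl(1-\Bigl(\tfrac{3}{4}\Bigr)^{m}\Bigr)\Delta\ \geq\ \frac{L^{2}}{2\pi}\Bigl(M_{m}-\frac{\pi^{m}}{2}\Bigr),
$$
and it remains to show $M_{m}\geq\pi^{m}/2$. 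This is elementary: $(\omega^{m}-\sin^{m}\omega)'=m\omega^{m-1}-m\sin^{m-1}\omega\cos\omega\geq 0$ on $[0,\pi]$ (on $[0,\tfrac{\pi}{2}]$ because $\omega\geq\sin\omega$ and $\cos\omega\leq 1$, on $[\tfrac{\pi}{2},\pi]$ because $\cos\omega\leq 0$), and $1-\cos\omega\leq 2$, so
$$
M_{m}=\int_{0}^{\pi}\frac{(\omega^{m}-\sin^{m}\omega)'}{1-\cos\omega}\,d\omega\ \geq\ \frac12\int_{0}^{\pi}(\omega^{m}-\sin^{m}\omega)'\,d\omega\ =\ \frac{\pi^{m}}{2}.
$$

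Finally, equality in the first inequality forces every summand above to vanish; the bracketed coefficient being strictly positive (the comparison for $I_{k}$ has room to spare for every odd $k\geq 3$), all those $c_{k}$ are zero, and with $c_{k}=0$ for even $k$ the support function reduces to $a_{0}+a_{1}\cos\theta+b_{1}\sin\theta$, i.e. $K$ is a disc — for which both members of the first inequality coincide. (The second inequality is in fact always strict, as $M_{m}>\pi^{m}/2$.) The real work, and essentially the only difficulty, is the sign-indefinite estimate $I_{k}\geq\tfrac12(\tfrac{3\pi}{4})^{m}(k^{2}-1)$ of the second paragraph.
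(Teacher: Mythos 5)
Your reduction of the first inequality to the coefficient bound $\beta_{k}\geq-\frac{\pi^{m}}{2}\bigl(1-(\tfrac34)^{m}\bigr)(k^{2}-1)$ for odd $k\geq 3$ is exactly the right target (and matches the paper's $K_{m}\leq\frac{\pi^{m-1}}{4}\bigl(1-(\tfrac34)^{m}\bigr)\Delta$ via $\Delta=2\pi^{2}\sum_{k\geq2}(k^{2}-1)c_{k}^{2}$), and your treatment of the second inequality ($M_{m}\geq\pi^{m}/2$ by an elementary pointwise estimate, then the isoperimetric inequality) is correct and even a little more self-contained than the paper's, which gets $M_{m}\geq\pi^{m}/2$ by evaluating the identity on a circle. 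But the proposal has a genuine gap precisely where you say the ``heart of the matter'' lies: the estimate $I_{k}\geq\frac12\bigl(\frac{3\pi}{4}\bigr)^{m}(k^{2}-1)$ is never proved. You only remark that the kernel $h_{k}/\sin^{3}\omega$ changes sign, that one must ``control where its mass concentrates,'' and that the constant $(\tfrac34)^{m}$ ``comes out of the extremal configuration of that comparison.'' That is a description of the difficulty, not an argument, and since this bound \emph{is} the content of the theorem, the proof is incomplete; the equality discussion also leans on the unproved claim that the comparison ``has room to spare.''

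You also miss the structural simplification that makes the paper's proof short and that your kernel formulation obscures. In the constant-width case only odd $k$ occur, and by Proposition \ref{propo44} (or \eqref{betask}) the coefficient for odd $k$ involves only the $\omega^{m}$ part: $\beta_{k}=-2m\sum_{j=2,\,\mathrm{even}}^{k-1}j\,V_{m-1,j}$ with $V_{m-1,j}=\int_{0}^{\pi}\omega^{m-1}\cos(j\omega)\,d\omega$, a finite explicit sum of trigonometric moments rather than a sign-indefinite integral against $h_{k}/\sin^{3}\omega$. The paper then bounds each moment by its last quarter-period,
$$
V_{m-1,j}<\int_{\pi-\frac{\pi}{2j}}^{\pi}\omega^{m-1}\,d\omega\leq\frac{\pi^{m}}{m}\left(1-\Bigl(\frac{3}{4}\Bigr)^{m}\right)\quad(j\geq2),
$$
and uses $\sum_{j=2,\,\mathrm{even}}^{k-1}j=\frac14(k^{2}-1)$ to get exactly the bound you need, with strictness giving the equality case. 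So the factor $(\tfrac34)^{m}$ is not the outcome of some extremal kernel configuration but simply $\bigl(1-\frac{1}{2j}\bigr)^{m}$ at $j=2$. Without this (or some substitute) step, your argument does not establish the theorem.
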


\section{Preliminaries}

A set $K\subset \R^{2}$ is \emph{convex} if it contains the complete segment joining every two points in the set. We shall consider nonempty  compact convex sets. The \emph{support function} of $K$ is defined as
$$
p_{K}(u)=\sup\{\langle x,u\rangle\, :\, x\in K\}\quad \text{for}\quad u\in \R^{2}.
$$
For a unit vector $u$ the number $p_{K}(u)$ is the signed distance of the support line to~$K$ with outer normal vector~$u$ from the origin. The distance is negative if and only if $u$points into the open half-plane containing the origin (cf.~\cite{Schneider2013}). 
We shall denote by~$p(\varphi)$ 
the $2\pi$-periodic function obtained by evaluating $p_{K}(u)$   on $u=(\cos\varphi,\sin\varphi)$. Note that $\partial K$ is the envelope of the one parametric family of lines given by
$$
x\cos\varphi+y \sin\varphi=p(\varphi).
$$
If the support function $p(\varphi)$ is differentiable  we can parametrize the boundary $\partial K$ by
\begin{equation*}\label{paramfi}
\gamma(\varphi)=p(\varphi)N(\varphi)+p'(\varphi)N'(\varphi),
\end{equation*}
where $N(\varphi)=(\cos\varphi,\sin\varphi).$
When $p$ is a $\mathcal{C}^{2}$ function the radius of curvature $\rho(\varphi)$ of $\partial K$ at the point $\gamma(\varphi)$ is given by $p(\varphi)+p''(\varphi)$. Then, convexity is equivalent to $p(\varphi)+p''(\varphi)\geq 0$. From now on we will assume that $p$ is of class $C^{2}$.	

It can be seen (cf.~\cite{santalo}) that the length $L$ of $\partial K$ and  the area $F$ of $K$ are given in terms of the support function, respectively, by
\begin{equation}\label{longarea}
L=\int_{0}^{2\pi}p\,d\varphi\quad\text{and}\quad F=\frac{1}{2}\int_{0}^{2\pi}(p^2-p'^2)\,d\varphi.
\end{equation}	
We will consider $\omega=\omega(P)$  the \emph{visual angle} of $\partial K$ from an exterior point $P$, that is the angle 
between the tangents from $P$ to $\partial K$.
For a function $f(\omega)$ of the visual angle $\omega$ we will deal with the integral of $f(\omega)$ with respect to the  area measure $dP$. 	

A special type of convex sets are those  of \emph{constant width}, that is those convex sets whose orthogonal projection on any direction have the same length~$w$.
In terms  of the support function~$p$ of $K$,  constant width means that $p(\varphi)+p(\varphi+\pi)=w$. Expanding $p$ in Fourier series
\begin{equation}\label{fourier}
p(\varphi)=a_{0}+\sum_{n=1}^{\infty}\left(a_{n}\cos n\varphi+b_{n}\sin n\varphi\right),
\end{equation}
it follows that
$$
p(\varphi)+p(\varphi+\pi)=2\sum_{n=0}^{\infty}\left(a_{2n}\cos 2n\varphi+b_{2n}\sin 2n\varphi\right),
$$ 
and therefore constant width is equivalent to   $a_{n}=b_{n}=0$ for all even $n>0$.

Given a compact convex set $K$ with support function $p(\varphi)$ 
the \emph{Steiner point} of~$K$ is defined by the vector-valued integral
$$
s(K)={1\over \pi}\int_{0}^{2\pi}p(\varphi)N(\varphi)\,d\varphi.
$$
This functional on the space of convex sets is additive with respect to the Minkowski sum. The Steiner point is rigid motion equivariant; this means that $s(gK)=gs(K)$ for every rigid motion~$g$. We remark that $s(K)$ can be considered, in the ${\mathcal C}^2$ case, as the centroid with respect to the curvature measure in the boundary $\partial K$; also we have that $s(K)$ lies in the interior of $K$ (cf.~\cite{groemer}). 
In terms of the Fourier coefficients of $p(\varphi)$ given in \eqref{fourier} 
the Steiner point is 
$$
s(K)=(a_{1},b_{1}).
$$

The relation between the support function $p(\varphi)$ of a convex set $K$ and the support function $q(\varphi)$ of the same convex set  but with respect to a new reference with origin at the point $(a,b)$, and axes parallel to the previous 
$x$ and $y$ axes,  is given by 
$$
q(\varphi)=p(\varphi)-a\cos\varphi-b\sin\varphi.
$$ 
Hence, taking the Steiner point as a new origin, we have
$$
q(\varphi)=a_{0}+\sum_{n\geq 2}\left(a_{n}\cos n\varphi+b_{n}\sin n\varphi\right).
$$
The associated \emph{pedal curve} to $K$ will be the curve that in polar coordinates with respect to the Steiner point as origin is given by $r=p(\varphi)$. 
In fact it is the geometrical locus of the orthogonal projection of the center on the tangents to the curve. The area~$A$ enclosed by the pedal curve is
$$
A= \frac12 \int_{0}^{2\pi}p(\varphi)^{2}\,d\varphi.
$$

\section{First integral formula}
Let  $f(\omega)$ be a function 
of the visual angle $\omega=\omega(P)$  of a given compact convex set~$K$ from a point~$P$ outside $K$.  In this section we will give a formula
to compute the integral of $f(\omega)$ with respect to the area measure~$dP$, $\int_{P\notin K}f(\omega)\,dP$,  in terms of the area of~$K$, the length of the boundary of~$K$, and the Fourier coefficients of the support function of~$K$. 
\medskip

For each point $P\notin K$ let $\varphi$ be the angle at the origin  formed  by  the normal to one of the tangents from $P$ to $\partial K$ with the $x$ axis; the pair $(\varphi,\omega)$
can be considered as a system of coordinates of $\R^2\setminus K$.

\begin{figure}[h] 
   \centering
   \includegraphics[width=.8\textwidth]{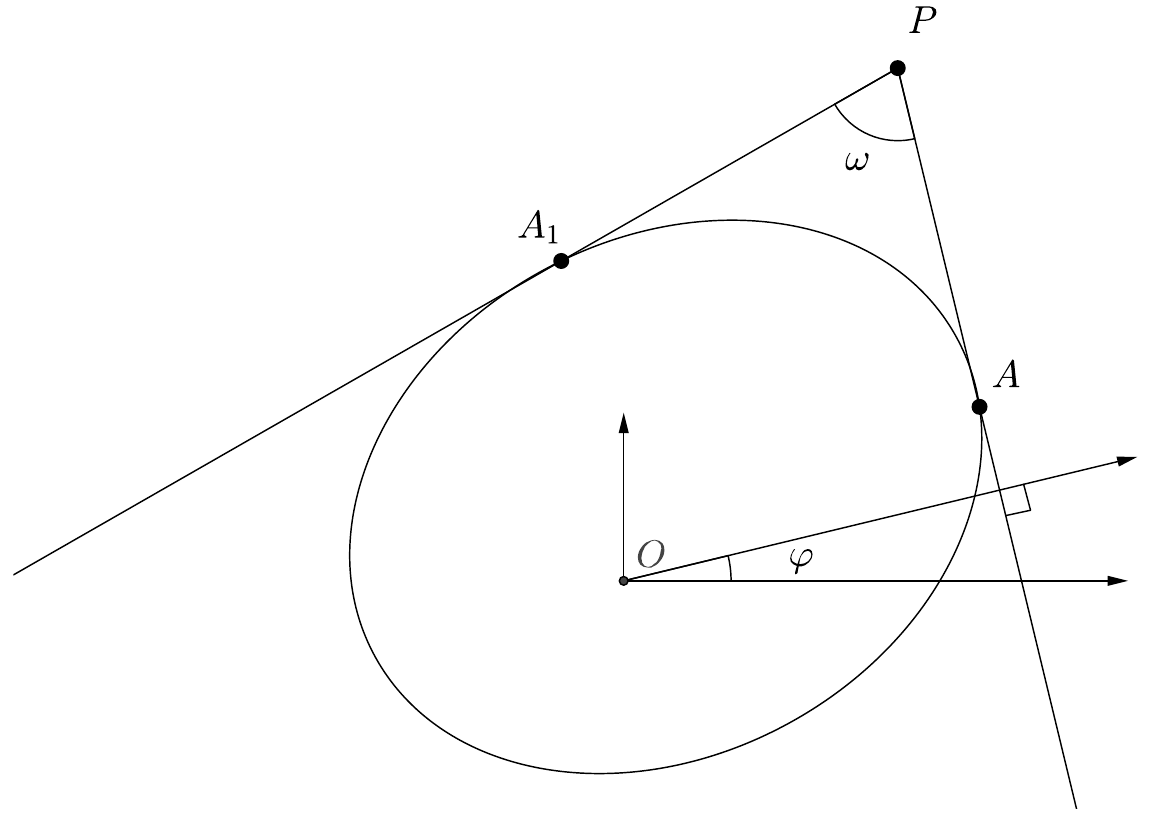} 
   \caption{The visual angle $\omega$.}
   \label{fig:masotti}
\end{figure}

Denoting by $A$, $A_{1}$ the contact points of the tangents from $P$
to $\partial K$, and by~$p=p(\varphi)$ the support function of $K$ with respect an origin $O$ inside $K$ (see Figure~\ref{fig:masotti}), we have 
\begin{equation*}
A=(p\cos\varphi-p'\sin\varphi, p\sin\varphi+p'\cos\varphi)
\end{equation*}
and hence, denoting 
$$
\varphi_{1}=\pi+\varphi-\omega
$$
we get
\begin{equation*}
A_{1}=(-p_{1}\cos(\varphi-\omega)+p'_{1}\sin(\varphi-\omega), -p_{1}\sin(\varphi-\omega)-p'_{1}\cos(\varphi-\omega)),
\end{equation*}
where $p_{1}(\varphi)=p(\varphi_{1})$, $p'_{1}(\varphi)=p'(\varphi_{1})$.

The intersection point $P=(X,Y)$ of the  tangent lines to $\partial K$ at points~$A$ and~$A_{1}$ is given by
\begin{align*}
X&=-\frac{1}{\sin\omega}(p\sin(\varphi-\omega)+p_{1}\sin\varphi),\\*[5pt]
Y&=\frac{1}{\sin\omega}(p\cos(\varphi-\omega)+p_{1}\cos\varphi).
\end{align*}
From this it is easy to see that the distances $T=PA$ and $T_{1}=PA_{1}$
are given by the positive quantities 
\begin{equation}\label{nov9}
\begin{aligned}
T &=\frac{1}{\sin\omega}(p\cos\omega-p'\sin\omega+p_{1}),\\*[5pt]
T_{1}&=\frac{1}{\sin\omega}(p_{1}\cos\omega+p_{1}'\sin\omega+p),
\end{aligned}
\end{equation}
due to the fact that the origin is inside $K$.

The area element $dP$ of $\R^2\setminus K$ is 
$$
dP=dX\wedge dY=\left(\frac{\partial X}{\partial \omega}\frac{\partial Y}{\partial\varphi}-\frac{\partial X}{\partial r\varphi}\frac{\partial Y}{\partial\omega}\right)\,d\varphi\wedge d\omega.
$$
A straightforward computation shows that 
\begin{equation*}\label{dP}
dP=\frac{TT_{1}}{\sin\omega}\,d\varphi\wedge d\omega.
\end{equation*}
This expression of the area element,  introduced by Crofton in \cite{crofton},    appears also in~\cite{masotti} and~\cite{santalo}.

Hence, the integral on $\R^2\setminus K$ of a suitable function of the visual angle $f(\omega)$ is given by
$$
\int_{P\notin K}f(\omega)\,dP=\int_{0}^{\pi}\int_{0}^{2\pi}\frac{f(\omega)}{\sin\omega}TT_{1}\,d\varphi\,d\omega=\int_{0}^{\pi}\frac{f(\omega)}{\sin\omega}\biggl(\int_{0}^{2\pi}TT_{1}\,d\varphi\biggr)\,d\omega.
$$

Now we will write the product $TT_{1}$ in terms of the Fourier coefficients of  $p(\varphi)$ given in \eqref{fourier},  
and the Fourier coefficients of $p_{1}(\varphi)$
given by
$$
p_{1}(\varphi)=a_{0}+\sum_{k>0} (A_{k}\cos k\varphi+B_{k}\sin k\varphi)
$$
which are related to the coefficients of $p(\varphi)$ by  
\begin{align*}
A_{k}&=(-1)^{k+1}(-a_{k}\cos k\omega+b_{k}\sin k\omega),\\
B_{k}&=(-1)^{k+1}(-a_{k}\sin k\omega-b_{k}\cos k\omega).
\end{align*}
Substituting these Fourier series in \eqref{nov9}, a straightforward but long calculation gives
\begin{equation*}\label{intP}
\int_{0}^{2\pi}TT_{1}\,d\varphi=\frac{1}{\sin^{2}\omega}\biggl(\frac{L^{2}}{2\pi}(1+\cos\omega)^{2}+\pi\sum_{k>0} c_{k}^{2}h_{k}(\omega)\biggr),
\end{equation*}
where $c_{k}^{2}=a_{k}^{2}+b_{k}^{2}$ and 
\begin{equation}\label{hdek}
\begin{split}
h_{k}(\omega)&=2\cos\omega+(-1)^{k+1}\bigl(-\cos k\omega(1+\cos^{2}\omega)\\
&\quad-2k\sin k\omega\sin\omega\cos\omega +k^{2}\cos k\omega\sin^{2}\omega\bigr).
\end{split}
\end{equation}
These functions can also be written as
\begin{equation*}
\begin{split}
h_{k}(\omega)&=\frac{(-1)^{k}}{4}\left[({k+1})^{2}\cos((k-2)\omega)\!+\!({k-1})^{2}\cos((k+2)\omega)\!-\!{2(k^{2}-3)}\cos(k\omega)\right]\\ 
&\quad+ 2\cos\omega.
\end{split}
\end{equation*}
Notice that $h_{1}\equiv 0$, $h_{k}(0)=2(1+(-1)^{k})$ and $h_{k}(\omega)=O((\omega-\pi)^{4})$, as $\omega$ tends to~$\pi$. Hence we have obtained the  following result.
\begin{teorema}\label{fundamental}
Let $K$ be a compact convex set with boundary of class $C^{2}$ and let $L$ be the length of $\partial K$. 
Let $c_{k}^{2}=a_{k}^{2}+b_{k}^{2}$ where $a_{k}$, $b_{k}$ are the Fourier coefficients of the support function of $K$. 
Then, for every continuous function of the visual angle~$f(\omega)$ on $[0,\pi]$ such that
$f(\omega)=O(\omega^{3})$, as $\omega$ tends to zero, one has
\begin{equation}\label{ft31}
\begin{aligned}
&\int_{P\notin K}f(\omega)\,dP\\
&=\left(\int_{0}^{\pi}\frac{f(\omega)(1+\cos\omega)^{2}}{\sin^3\omega}\,d\omega\right)\frac{L^{2}}{2\pi}+\pi\sum_{k\geq 2} \left(\int_{0}^{\pi}\frac{f(\omega)h_{k}(\omega)}{\sin^3\omega}\,d\omega\right)c_{k}^{2},
\end{aligned}
\end{equation}
where   $h_{k}$, for $k\geq 2$, are the   universal functions given  in \eqref{hdek}.
\end{teorema}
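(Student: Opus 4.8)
The plan is to assemble the ingredients already prepared in this section and then justify two interchanges of limiting operations. Since the right-hand side of \eqref{ft31} depends only on $L$ and on the coefficients $c_{k}^{2}$ with $k\geq 2$, all of which are unaffected by a translation of $K$, one may first move the origin to the interior of $K$ (for instance to the Steiner point), so that the area-element identity $dP=\dfrac{TT_{1}}{\sin\omega}\,d\varphi\wedge d\omega$ and the evaluation $\displaystyle\int_{0}^{2\pi}TT_{1}\,d\varphi=\frac{1}{\sin^{2}\omega}\Bigl(\frac{L^{2}}{2\pi}(1+\cos\omega)^{2}+\pi\sum_{k>0}c_{k}^{2}h_{k}(\omega)\Bigr)$ obtained above are in force. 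Using $(\varphi,\omega)$ as coordinates on $\R^{2}\setminus K$ and the fact that $h_{1}\equiv0$, this yields, at least formally,
\[
\int_{P\notin K}f(\omega)\,dP=\int_{0}^{\pi}\frac{f(\omega)}{\sin^{3}\omega}\Bigl(\frac{L^{2}}{2\pi}(1+\cos\omega)^{2}+\pi\sum_{k\geq2}c_{k}^{2}h_{k}(\omega)\Bigr)\,d\omega,
\]
and it remains to legitimise (i) the reduction of the double integral in $(\varphi,\omega)$ to this iterated integral, and (ii) the term-by-term integration of the series in $k$.

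For (i) I would invoke Tonelli's theorem, which applies because $TT_{1}\geq0$ on $\R^{2}\setminus K$: it suffices to check that the displayed integrand, with $f$ and $h_{k}$ replaced by $|f|$ and $|h_{k}|$, is integrable on $(0,\pi)$. Near $\omega=0$ this follows from the hypothesis $f(\omega)=O(\omega^{3})$, together with $\sin^{3}\omega\sim\omega^{3}$ and the boundedness of $(1+\cos\omega)^{2}$ and of each $h_{k}$; near $\omega=\pi$ it follows from $(1+\cos\omega)^{2}=O((\pi-\omega)^{4})$ and $h_{k}(\omega)=O((\pi-\omega)^{4})$, noted above, against $\sin^{3}\omega\sim(\pi-\omega)^{3}$ and the continuity (hence boundedness) of $f$. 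This already establishes the coefficient of $L^{2}/2\pi$ in \eqref{ft31}.

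For (ii) the crucial input is a pair of elementary bounds on the universal functions, both read off from the closed form of $h_{k}$ in \eqref{hdek}: a global estimate $|h_{k}(\omega)|\leq Ck^{2}$ on $[0,\pi]$, and --- exploiting the fourth-order vanishing of $h_{k}$ at $\omega=\pi$, with the leading coefficient tracked as a polynomial in $k$ --- a local estimate of the form $|h_{k}(\pi-t)|\leq Ck^{6}t^{4}$ for $0\leq t\leq1$, with $C$ absolute. Combining these with the boundedness of $f(\omega)/\sin^{3}\omega$ near $0$ and on each $[\varepsilon,\pi-\varepsilon]$, and with $\sin\omega\sim\pi-\omega$ near $\pi$ (splitting the integral near $\pi$ at $t=1/k$ to use whichever of the two bounds is smaller), I expect to arrive at an estimate $\int_{0}^{\pi}|f(\omega)h_{k}(\omega)|/\sin^{3}\omega\,d\omega\leq C'k^{4}$ for all $k\geq2$, with $C'$ depending only on $f$ and $K$. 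Since $p\in C^{2}$, the Fourier coefficients of $p''$ are $(-k^{2}a_{k},-k^{2}b_{k})$ with $p''\in L^{2}[0,2\pi]$, so Parseval gives $\sum_{k\geq2}k^{4}c_{k}^{2}<\infty$, whence $\sum_{k\geq2}c_{k}^{2}\int_{0}^{\pi}|f(\omega)h_{k}(\omega)|/\sin^{3}\omega\,d\omega<\infty$; Fubini's theorem for the product of Lebesgue measure with counting measure then allows the interchange of $\sum_{k\geq2}$ with $\int_{0}^{\pi}$, which is exactly \eqref{ft31}.

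The step I expect to be the main obstacle is (ii). A crude bound $|h_{k}|\leq Ck^{2}$ by itself is useless near $\omega=\pi$, since $\int_{\pi-\varepsilon}^{\pi}\sin^{-3}\omega\,d\omega$ diverges; one must genuinely use the fourth-order zero of $h_{k}$ at $\omega=\pi$ and control how the constant in that estimate grows with $k$, and only the balance between that growth and the decay $\sum k^{4}c_{k}^{2}<\infty$ forced by the $C^{2}$ regularity of the support function makes the series manipulation legitimate. Everything else is either routine trigonometric bookkeeping or a standard application of Fubini--Tonelli.
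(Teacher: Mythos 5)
Your proposal is correct and follows essentially the same route as the paper: the $(\varphi,\omega)$ coordinates, the area element $dP=\frac{TT_{1}}{\sin\omega}\,d\varphi\wedge d\omega$, and the Fourier evaluation of $\int_{0}^{2\pi}TT_{1}\,d\varphi$, from which the formula is read off by integrating in $\omega$. The only difference is that you supply the Fubini--Tonelli justification that the paper leaves implicit, and your estimates there are sound: $|h_{k}|\leq Ck^{2}$, the fourth-order vanishing at $\pi$ with $|h_{k}(\pi-t)|\leq Ck^{6}t^{4}$, the split at $t=1/k$ giving $\int_{0}^{\pi}|f h_{k}|\sin^{-3}\omega\,d\omega\leq C'k^{4}$, and $\sum_{k}k^{4}c_{k}^{2}<\infty$ from $p\in C^{2}$ via Parseval.
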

As a first example we can easily compute  the  integral in \eqref{hurwitzsin3}:
\begin{equation}\label{set23}
\begin{split}
\int_{P\notin K}\sin^{3}\omega\, dP&=\frac{L^{2}}{2\pi}\int_{0}^{{\pi}}(1+\cos\omega)^{2}d\omega+\pi\sum_{k\geq 2} c_{k}^{2}\int_{0}^{\pi}h_{k}(\omega)\,d\omega\\*[5pt]
&=\frac{3}{4}L^{2}+\frac{9}{4}\pi^{2}c_{2}^{2}.
\end{split}
\end{equation}
Notice that $\gamma_{2}^{2}=9c_{k}^{2}$ (see the footnote in page \pageref{hurwitz}).
This formula shows that the quantity  $c_{2}^{2}$ is invariant with respect to euclidean motions of $K$.

\section{The area of level sets and second integral formula}
As an application of Theorem \ref{fundamental} we will now compute the 
 area $F(\omega)$ enclosed by  the locus $C_{\omega}$ of the points from which  the convex set $K$ is viewed under the same angle $\omega$.  The corresponding  formula \eqref{areaF} was first given by Hurwitz in \cite{Hurwitz1902} and we will use it later on to obtain  another version 
of formula \eqref{ft31}.

Applying formula \eqref{ft31} with $f$ the characteristic function of the domain enclosed by the level set $C_{\omega}$ we have
\begin{equation}\label{nov9c}
\begin{aligned}
F(\omega) &= 
F+\left(\int_{\omega}^{\pi}\frac{(1+\cos\tau)^{2}}{\sin^{3}\tau}\,d\tau\right)\frac{L^{2}}{2\pi}+\pi\sum_{k\geq 2}\left(\int_{\omega}^{\pi}\frac{h_{k}(\tau)}{\sin^3{\tau}}\,d\tau\right)c_{k}^{2}\\*[5pt]
 &= F+\frac{L^2}{2\pi}\left[-\frac{1}{2\sin^2(\tau/2)}\right]_{\omega}^{\pi}+\pi\sum_{k\geq 2}\int_{\omega}^{\pi}\left(\frac{h_{k}(\tau)}{\sin^3\tau}\,d\tau\right) c_{k}^2\\*[5pt]
 &= F+\frac{L^2}{4\pi}\cot^{2}(\omega/2)+\pi\sum_{k\geq 2}\int_{\omega}^{\pi}\left(\frac{h_{k}(\tau)}{\sin^3\tau}\,d\tau\right) c_{k}^2.
\end{aligned}
\end{equation}
Using \eqref{longarea} and the Fourier expansions of $p$ and $p'$ one gets (see for instance \cite{CufiRev})
\begin{equation*}\label{areacks}
F=\frac{L^2}{4\pi}-\frac{\pi}{2}\sum_{k\geq 2}(k^2-1)c_{k}^2
\end{equation*}
and equation \eqref{nov9c} can be written as  
$$
F(\omega)=\frac{L^2}{4\pi}\frac{1}{\sin^2(\omega/2)}-\frac{\pi}{2}\sum_{k\geq 1}(k^2-1)c_{k}^2+\pi\sum_{k\geq 2}\int_{\omega}^{\pi}\left(\frac{h_{k}(\tau)}{\sin^3\tau}\,d\tau\right) c_{k}^2.
$$
Equivalently 
$$ 
F(\omega)\sin^2\omega=\frac{L^2}{2\pi}(1+\cos\omega)-\frac{\pi}{2}\sin^2\omega\sum_{k\geq 1}(k^2-1)c_{k}^2+\pi\sin^2{\omega}\sum_{k\geq 2}\int_{\omega}^{\pi}\left(\frac{h_{k}(\tau)}{\sin^3\tau}\,d\tau\right) c_{k}^2.
$$

Introducing the functions 
\begin{equation}\label{nov10e}
g_{k}(\omega)=1+\frac{(-1)^k}{2}\left((k+1)\cos(k-1)\omega-(k-1)\cos(k+1)\omega\right),
\end{equation}
already considered by Hurwitz, 
and using that 
$$
\frac{h_{k}(\tau)}{\sin^3\tau}=-\left(\frac{g_{k}(\tau)}{\sin^{2}\tau}\right)'
$$
we get

\begin{prop} 
Under the same hypothesis of Theorem \ref{fundamental}, the area $F(\omega)$  enclosed by  the locus $C_{\omega}$ of the points from which  the compact convex set $K$ is viewed under the same angle $\omega$ is given by
\begin{equation}\label{areaF}
F(\omega)\sin^2\omega=
\frac{L^2}{2\pi}(1+\cos\omega)+\pi\sum_{k\geq 2}c_{k}^2g_{k}(\omega),
\end{equation}
where the functions  $g_{k}(\omega)$ are defined in \eqref{nov10e}.
\end{prop}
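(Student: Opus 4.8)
The plan is to feed Theorem~\ref{fundamental} the indicator of a level region, carry out two explicit integrations, and then match the outcome against the classical expression for $F$ in terms of Fourier coefficients. First I would fix $\omega\in(0,\pi)$ and take $f$ to be the characteristic function of $[\omega,\pi]$, so that $f(\omega(P))=1$ exactly when $K$ is seen from $P$ under an angle at least $\omega$; the exterior points with this property fill the region bounded by $\partial K$ and $C_\omega$, whence $\int_{P\notin K}f\,dP=F(\omega)-F$. The function $f$ is not continuous, but it vanishes identically near $0$, and near $\pi$ the weights in \eqref{ft31} are innocuous: both $(1+\cos\tau)^2$ and $h_k(\tau)$ are $O((\tau-\pi)^4)$ while $\sin^3\tau$ vanishes only to order $3$, so $f(\tau)(1+\cos\tau)^2/\sin^3\tau$ and $f(\tau)h_k(\tau)/\sin^3\tau$ are $O(\tau-\pi)$; the sole remaining discontinuity of $f$ is the jump at the interior point $\tau=\omega$, harmless for integrability. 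Hence \eqref{ft31}, possibly after a routine approximation of $f$ by continuous functions, yields exactly \eqref{nov9c}.

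Next I would perform the two computations that convert \eqref{nov9c} into \eqref{areaF}. For the $L^2$ term the half-angle identities reduce $(1+\cos\tau)^2/\sin^3\tau$ to $\tfrac12\cos(\tau/2)\sin^{-3}(\tau/2)$, whose primitive is $-\tfrac12\sin^{-2}(\tau/2)$; evaluated over $[\omega,\pi]$ this gives $\tfrac12\cot^2(\omega/2)$, i.e.\ the term $\tfrac{L^2}{4\pi}\cot^2(\omega/2)$ already displayed in \eqref{nov9c}. Then I would insert the classical identity $F=\tfrac{L^2}{4\pi}-\tfrac{\pi}{2}\sum_{k\ge2}(k^2-1)c_k^2$, immediate from \eqref{longarea} and the Fourier series of $p$ and $p'$, use $1+\cot^2(\omega/2)=\sin^{-2}(\omega/2)$, multiply the whole identity by $\sin^2\omega$, and simplify $\sin^2\omega/(2\sin^2(\omega/2))=1+\cos\omega$. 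This produces
\begin{equation*}
F(\omega)\sin^2\omega=\frac{L^2}{2\pi}(1+\cos\omega)-\frac{\pi}{2}\sin^2\omega\sum_{k\ge2}(k^2-1)c_k^2+\pi\sin^2\omega\sum_{k\ge2}c_k^2\int_\omega^\pi\frac{h_k(\tau)}{\sin^3\tau}\,d\tau.
\end{equation*}

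The last step is to evaluate $\int_\omega^\pi h_k(\tau)\sin^{-3}\tau\,d\tau$ by means of the stated antiderivative $h_k/\sin^3=-(g_k/\sin^2)'$, obtaining $g_k(\omega)/\sin^2\omega-\lim_{\tau\to\pi}g_k(\tau)/\sin^2\tau$. A brief Taylor expansion of the explicit $g_k$ from \eqref{nov10e} at $\tau=\pi$ shows $g_k(\pi)=g_k'(\pi)=0$ and $g_k''(\pi)=-(k^2-1)$, so the limit equals $\tfrac12 g_k''(\pi)=-\tfrac12(k^2-1)$. Feeding this back, the last sum in the display splits into $\pi\sum_{k\ge2}c_k^2 g_k(\omega)$ plus $\tfrac{\pi}{2}\sin^2\omega\sum_{k\ge2}(k^2-1)c_k^2$, and the second piece cancels exactly the term carried over from $F$; what remains is precisely \eqref{areaF}.

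I expect the genuinely delicate point to be this boundary term at $\tau=\pi$: one has to know that $g_k$ vanishes to \emph{second} order there, which is exactly what makes $g_k(\tau)/\sin^2\tau$ convergent and what supplies the coefficient $-(k^2-1)/2$ needed for the cancellation with $F$ — without it the two sums over $k$ would not combine into the clean form \eqref{areaF}. Everything else is routine: the interchange of the sum over $k$ with the integral is legitimate because $\partial K\in\mathcal{C}^2$ forces $\sum k^2 c_k^2<\infty$ while $|g_k|=O(k)$ and $|h_k(\tau)/\sin^3\tau|=O(k^2)$ on compact subsets of $(0,\pi)$, and at $\omega=\pi$ both sides of \eqref{areaF} vanish, consistently with $F(\pi)=F$.
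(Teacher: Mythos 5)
Your proposal is correct and follows essentially the same route as the paper: apply \eqref{ft31} to the indicator of the level region to get \eqref{nov9c}, integrate the $(1+\cos\tau)^2/\sin^3\tau$ term to $\tfrac12\cot^2(\omega/2)$, substitute $F=\tfrac{L^2}{4\pi}-\tfrac{\pi}{2}\sum_{k\ge2}(k^2-1)c_k^2$, and use the antiderivative relation $h_k/\sin^3\tau=-(g_k/\sin^2\tau)'$, with the second-order vanishing of $g_k$ at $\pi$ supplying the boundary value $-\tfrac12(k^2-1)$ that cancels the extra sum. Your explicit verification of that boundary limit and of the convergence/approximation details only makes explicit what the paper leaves implicit.
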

Notice that the asymptotic behavior of $F(\omega)$ for $\omega$ near $0$ is given by 
\begin{equation}\label{nnn}
\lim_{\omega \to 0}\bigl(F(\omega)\sin^2\omega\bigr)=\frac{L^{2}}{\pi}+2\pi\sum_{k\geq 2,\, \textrm{even}}c_{k}^{2},
\end{equation}
an equality that appears in \cite{Hurwitz1902}. In the especial case of a  compact set  of constant width it is  
 $$
 \lim_{\omega \to 0}\bigl(F(\omega)\sin^2\omega\bigr)=\frac{L^{2}}{\pi}.
 $$

As $dP=P(\varphi,\omega)\,d\varphi\wedge d\omega$ we can write 
$$
F(\omega)=F+\int_{0}^{2\pi}\int_{\omega}^{\pi}P(\varphi,\tau)\,d\tau\, d\varphi,
$$ so that  $F'(\omega)=-\int_{0}^{2\pi}P(\varphi,\omega)\, d\varphi$. Therefore
$$
\int_{P\notin K}f(\omega)\,dP=-\int_{0}^{\pi}f(\omega)F'(\omega)\,d\omega.
$$
Integrating by parts and using the functions $g_{k}$ given in \eqref{nov10e} we obtain  

\begin{prop}\label{prop42}
With the same hypothesis of Theorem \ref{fundamental} we have 
\begin{equation*}\label{segonaexpbis}
\int_{P\notin K}f(\omega)\,dP=-\left[f(\omega)F(\omega)\right]_{0_{+}}^{\pi_{-}}
+ \frac{L^{2}}{2\pi}M(f) +
\pi\sum_{k\geq 2}\beta_{k}(f)c_{k}^{2},
\end{equation*}
where
\begin{equation}\label{defMbeta}
M(f)=\int_{0}^{\pi}{f'(\omega)\over 1-\cos\omega}\,d\omega\quad \text{and}\quad  \beta_{k}(f)=\int_{0}^{\pi}{f'(\omega)g_{k}(\omega)\over \sin^{2}\omega}\,d\omega.
\end{equation}
\end{prop}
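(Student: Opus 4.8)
The plan is to take as starting point the two identities already established in the text: the formula
$$
\int_{P\notin K}f(\omega)\,dP=-\int_{0}^{\pi}f(\omega)F'(\omega)\,d\omega,
$$
and Hurwitz's formula \eqref{areaF} for $F(\omega)\sin^{2}\omega$. First I would integrate by parts in the displayed integral, writing it as $-[f(\omega)F(\omega)]_{0_{+}}^{\pi_{-}}+\int_{0}^{\pi}f'(\omega)F(\omega)\,d\omega$; the boundary term is exactly the first term in the claimed formula, and it makes sense because the hypothesis $f(\omega)=O(\omega^{3})$ together with the asymptotics \eqref{nnn} of $F(\omega)$ guarantees that $f(\omega)F(\omega)\to 0$ as $\omega\to 0_{+}$, while at $\pi$ one uses $h_{k}(\omega)=O((\omega-\pi)^{4})$, or equivalently the behaviour of $g_{k}$ near $\pi$, to see the limit exists.

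Next I would substitute the value of $F(\omega)$ from \eqref{areaF}, namely
$$
F(\omega)=\frac{L^{2}}{2\pi}\cdot\frac{1+\cos\omega}{\sin^{2}\omega}+\pi\sum_{k\geq 2}c_{k}^{2}\,\frac{g_{k}(\omega)}{\sin^{2}\omega},
$$
into $\int_{0}^{\pi}f'(\omega)F(\omega)\,d\omega$. Using $(1+\cos\omega)/\sin^{2}\omega=1/(1-\cos\omega)$, the $L^{2}$-term contributes $\frac{L^{2}}{2\pi}\int_{0}^{\pi}f'(\omega)/(1-\cos\omega)\,d\omega=\frac{L^{2}}{2\pi}M(f)$, and the $k$-th term contributes $\pi c_{k}^{2}\int_{0}^{\pi}f'(\omega)g_{k}(\omega)/\sin^{2}\omega\,d\omega=\pi c_{k}^{2}\beta_{k}(f)$, matching the definitions in \eqref{defMbeta}. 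This yields the stated formula term by term.

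The main obstacle is not any new computation but the analytic bookkeeping needed to justify the manipulations: I must check that each of the scalar integrals $M(f)$ and $\beta_{k}(f)$ converges at $\omega=0$ (where $1-\cos\omega\sim\omega^{2}/2$ and $\sin^{2}\omega\sim\omega^{2}$, so one needs $f'(\omega)=O(\omega)$, which follows from $f(\omega)=O(\omega^{3})$ being strengthened appropriately, or more carefully from the behaviour of $f$ that makes the left-hand side of \eqref{ft31} finite), and that the interchange of the sum over $k$ with the integral over $\omega$ is legitimate. For the latter I would invoke the uniform control on $g_{k}$ already implicit in the convergence of the series in \eqref{areaF} together with $\sum_{k}c_{k}^{2}k^{2}<\infty$ (which holds because $p\in C^{2}$), exactly as in the passage from the pointwise identity \eqref{nov9c} to \eqref{areaF}. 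Once these convergence points are recorded, the proof is a direct integration by parts followed by substitution, so I would keep the write-up short and refer back to the asymptotics \eqref{nnn} and to the identity $h_{k}/\sin^{3}\tau=-(g_{k}/\sin^{2}\tau)'$ for the boundary-term vanishing.
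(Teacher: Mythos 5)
Your proposal is correct and follows essentially the same route as the paper: the paper likewise starts from $\int_{P\notin K}f(\omega)\,dP=-\int_{0}^{\pi}f(\omega)F'(\omega)\,d\omega$, integrates by parts, and substitutes the expression \eqref{areaF} for $F(\omega)$ to identify $M(f)$ and $\beta_{k}(f)$. The only difference is that you spell out the convergence and boundary-term justifications, which the paper leaves implicit.
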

Notice that $M(f)$ and $\beta_{k}(f)$ depend only on the function $f$ and not on the shape of the convex set $K$.
\medskip	

As an application of Proposition  \ref{prop42} we can easily prove  Crofton's formula
\begin{equation}\label{croftonform}
\int_{P\notin K}(\omega-\sin\omega)\,dP=-\pi F +{L^{2}\over 2}.
\end{equation}
Indeed $M(\omega-\sin\omega)=\pi$ and $\beta_{k}(\omega-\sin\omega)=\int_{0}^{\pi}g_{k}(x)/(1+\cos(x))\, dx=0$, as can be easily seen integrating by parts and using  elementary trigonometric identities. Since
$$
-\lim_{\omega\to \pi}f(\omega)F(\omega)+\lim_{\omega\to 0}f(\omega)F(\omega)=-\pi F
$$
the formula follows.

We will find now another expression for the universal factors  $g_{k}(\omega)/\sin^{2}(\omega)$ appearing in the integral defining the coefficients $\beta_{k}(f)$.   

\begin{lemm} The following identities hold
\begin{alignat*}{2}
\frac{g_{k}(\omega)}{\sin^{2}(\omega)}&=\frac{1}{1-\cos\omega}+2\sum_{j=1,\, \textrm{odd}}^{k-1}j\cos(j\omega),&\quad &\text{ for $k$ even},  \\*[5pt]
\frac{g_{k}(\omega)}{\sin^{2}(\omega)}&=-2\sum_{j=2,\, \textrm{even}}^{k-1}j\cos(j\omega), &\quad &\text{ for $k$ odd. } 
\end{alignat*}
\end{lemm}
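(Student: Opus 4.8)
The plan is to prove the two identities by a direct manipulation of the explicit formula \eqref{nov10e} for $g_k$, converting the products of trigonometric functions into sums and recognizing a telescoping structure. First I would recall that
$$
g_{k}(\omega)=1+\frac{(-1)^k}{2}\bigl((k+1)\cos(k-1)\omega-(k-1)\cos(k+1)\omega\bigr),
$$
and that $\sin^2\omega=(1-\cos\omega)(1+\cos\omega)$. The key algebraic step is to write $\cos(k\pm1)\omega=\cos k\omega\cos\omega\mp\sin k\omega\sin\omega$, so that the bracket in $g_k$ becomes a combination of $\cos k\omega$, $\cos k\omega\cos\omega$ and $\sin k\omega\sin\omega$; comparing with \eqref{hdek} one sees in fact that $g_k(\omega)=(-1)^k\bigl(k^2\cos k\omega\sin^2\omega-\ldots\bigr)/2+1$ is closely tied to $h_k$, but it is cleaner to work directly. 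An alternative and perhaps more transparent route is induction on $k$ in steps of $2$: compute $g_{k+2}(\omega)-g_k(\omega)$ from \eqref{nov10e}, show it equals $2(k+1)\cos((k+1)\omega)\sin^2\omega$ after collecting terms (again using the product-to-sum formulas), and hence
$$
\frac{g_{k+2}(\omega)}{\sin^2\omega}-\frac{g_k(\omega)}{\sin^2\omega}=2(k+1)\cos((k+1)\omega),
$$
which is exactly the increment needed to pass from one side of each claimed identity to the next.

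With the recursion in hand, the proof reduces to checking the two base cases. For $k$ even the base case is $k=2$: from \eqref{nov10e}, $g_2(\omega)=1+\frac12(3\cos\omega-\cos3\omega)$, and one verifies by elementary trigonometry that $g_2(\omega)/\sin^2\omega=1/(1-\cos\omega)+2\cos\omega$, matching the right-hand side whose only term is $j=1$. For $k$ odd the base case is $k=3$: $g_3(\omega)=1-\frac12(4\cos2\omega-2\cos4\omega)=1-2\cos2\omega+\cos4\omega$, and since $1-\cos4\omega$ and $1-\cos2\omega$ can be written via half-angle identities, one checks $g_3(\omega)/\sin^2\omega=-4\cos2\omega$, matching the right-hand side whose only term is $j=2$. (It is worth noting that for $k=1$ one has $g_1\equiv0$, consistent with the empty sum, which also serves as a degenerate base point for the odd case if one prefers to start there; and $g_2(\omega)/\sin^2\omega$ carries the singular $1/(1-\cos\omega)$ term while $g_3(\omega)/\sin^2\omega$ is regular at $\omega=0$, which is the structural reason the two parities look different.)

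An even slicker finish, avoiding induction, is to sum the telescoped increments directly: writing $k=2m$ (even) or $k=2m+1$ (odd) and using $\sum$ of the increments $2(2j-1)\cos((2j-1)\omega)$ or $2(2j)\cos(2j\omega)$ from the base case up to $k$, one immediately obtains the stated finite sums. The only genuine obstacle I anticipate is bookkeeping: making sure the parity of the summation index $j$ (odd for $k$ even, even for $k$ odd) and the upper limit $k-1$ come out correctly, and that the increment computation $g_{k+2}-g_k=2(k+1)\cos((k+1)\omega)\sin^2\omega$ is carried out without sign errors when expanding $(k+3)\cos(k+1)\omega-(k+1)\cos(k+3)\omega-(k+1)\cos(k-1)\omega+(k-1)\cos(k+1)\omega$ and regrouping. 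This is a routine but slightly delicate trigonometric computation; once it is done the rest follows formally.
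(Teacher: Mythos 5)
Your argument is correct in substance but takes a genuinely different route from the paper. The paper gets both identities at once from the conjugate Dirichlet kernel: it quotes the closed forms $\sum_{j=1,\,\mathrm{odd}}^{k-1}\sin(j\omega)=(1-\cos k\omega)/(2\sin\omega)$ for $k$ even and $\sum_{j\,\mathrm{even}}^{k-1}\sin(j\omega)=(\cos\omega-\cos k\omega)/(2\sin\omega)$ for $k$ odd, and simply differentiates them; combined with the one-line rewriting of \eqref{nov10e} (for $k$ even, $g_k(\omega)=1+\cos k\omega\cos\omega+k\sin k\omega\sin\omega$, and similarly for $k$ odd) this yields the lemma immediately. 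Your proof instead runs an induction in steps of two on the explicit formula \eqref{nov10e}, with base cases $k=2$ and $k=3$, which you verify correctly; this avoids having to recall any kernel identity, at the price of a two-parity induction. One correction is needed in your increment: carrying the prefactor $(-1)^k/2$ from \eqref{nov10e}, and using $\cos((k+3)\omega)+\cos((k-1)\omega)=2\cos((k+1)\omega)\cos 2\omega$ together with $1-\cos 2\omega=2\sin^2\omega$, the bracket you display equals $4(k+1)\cos((k+1)\omega)\sin^{2}\omega$, so that
\begin{equation*}
g_{k+2}(\omega)-g_{k}(\omega)=2(-1)^{k}(k+1)\cos((k+1)\omega)\,\sin^{2}\omega .
\end{equation*}
Your stated increment $+2(k+1)\cos((k+1)\omega)\sin^{2}\omega$ is therefore right only along the even chain; along the odd chain the sign is negative, which is exactly what the right-hand side $-2\sum_{j\,\mathrm{even}}j\cos(j\omega)$ requires, so once the factor $(-1)^{k}$ is kept the telescoping closes in both parities and your proof is complete.
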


\begin{proof}
From the expression of the conjugate Dirichlet kernel one has 
\begin{alignat}{2}\label{des22c}
\sum_{j=1, \,\textrm{odd}}^{k-1}\sin(j\omega)&={1-\cos(k\omega)\over 2 \sin \omega},&\quad &\text{ for $k$ even},
\intertext{and} 
\label{des22d}
\sum_{j=1,\, \textrm{even}}^{k-1}\sin(j\omega)&={\cos(\omega)-\cos(k\omega)\over 2 \sin \omega},&\quad &\text{ for $k$ odd}.\nonumber
\end{alignat}
Differentiating these  formulas the lemma follows.
\end{proof}
From this Lemma and Proposition \ref{prop42} we get

\begin{prop}\label{propo44} 
With the same hypothesis of Theorem \ref{fundamental} we have
\begin{equation}\label{set23x}
\begin{aligned}
\int_{P\notin K}f(\omega)\,dP  &=  -\left[f(\omega)F(\omega)\right]_{0}^{\pi}+ \frac{L^{2}}{2\pi}M(f) \\*[5pt]
 &\quad+  \pi\sum_{k\geq 2,\, \textrm{even}}\left(M(f) 
+ 2\sum_{j=1,\, \textrm{odd}}^{k-1}\int_{0}^{\pi}f'(\omega)j\cos(j\omega)\, d\omega \right)c_{k}^{2} \\*[5pt]
 &\quad+  \pi\sum_{k\geq 3,\, \textrm{odd}}\left( -2\sum_{j=2,\, \textrm{even}}^{k-1}\int_{0}^{\pi}f'(\omega)j\cos(j\omega) \,d\omega \right)c_{k}^{2},
\end{aligned}
\end{equation}
where $M(f)$ is given in \eqref{defMbeta}.
\end{prop}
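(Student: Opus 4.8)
The plan is to read off the result by feeding the preceding Lemma into Proposition~\ref{prop42}. Proposition~\ref{prop42} already writes $\int_{P\notin K}f(\omega)\,dP$ as a boundary term, plus $\frac{L^{2}}{2\pi}M(f)$, plus $\pi\sum_{k\geq 2}\beta_{k}(f)c_{k}^{2}$ with $\beta_{k}(f)=\int_{0}^{\pi}f'(\omega)g_{k}(\omega)/\sin^{2}\omega\,d\omega$. So the only task is to rewrite each coefficient $\beta_{k}(f)$ using the two closed forms for $g_{k}(\omega)/\sin^{2}\omega$ supplied by the Lemma, and then to split the $k$-sum according to the parity of $k$.

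Concretely, for $k$ even I would substitute $g_{k}(\omega)/\sin^{2}\omega=\frac{1}{1-\cos\omega}+2\sum_{j=1,\,\textrm{odd}}^{k-1}j\cos(j\omega)$ into the integrand defining $\beta_{k}(f)$ and use linearity of the integral; since the $j$-sum is finite this interchange is immediate, and recognising $\int_{0}^{\pi}f'(\omega)/(1-\cos\omega)\,d\omega=M(f)$ from \eqref{defMbeta} gives $\beta_{k}(f)=M(f)+2\sum_{j=1,\,\textrm{odd}}^{k-1}\int_{0}^{\pi}f'(\omega)j\cos(j\omega)\,d\omega$. For $k$ odd the same manoeuvre with $g_{k}(\omega)/\sin^{2}\omega=-2\sum_{j=2,\,\textrm{even}}^{k-1}j\cos(j\omega)$ yields $\beta_{k}(f)=-2\sum_{j=2,\,\textrm{even}}^{k-1}\int_{0}^{\pi}f'(\omega)j\cos(j\omega)\,d\omega$.

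Finally I would put these two expressions back into the formula of Proposition~\ref{prop42}, write $\sum_{k\geq 2}=\sum_{k\geq 2,\,\textrm{even}}+\sum_{k\geq 3,\,\textrm{odd}}$, and leave the boundary term $-[f(\omega)F(\omega)]_{0_{+}}^{\pi_{-}}$ untouched; collecting terms produces exactly \eqref{set23x}. One may add the remark that, under the running hypothesis $f(\omega)=O(\omega^{3})$ together with the asymptotics \eqref{nnn} for $F(\omega)$, the product $f(\omega)F(\omega)$ tends to $0$ as $\omega\to 0$, so the boundary term in fact reduces to $-f(\pi)F$; but this is not needed for the statement as written.

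There is no genuine obstacle here: the substantive step has already been carried out in the Lemma, whose proof rests only on differentiating the conjugate Dirichlet kernel identities \eqref{des22c}, and the convergence of the outer series in $k$ is inherited from Proposition~\ref{prop42} and need not be revisited. The one place to be careful is purely bookkeeping — matching the index ranges ($j$ odd versus $j$ even) and the signs to the parity of $k$, and keeping the factor $M(f)$ attached only to the even terms. Everything else is linearity of the integral.
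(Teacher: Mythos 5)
Your proposal is correct and matches the paper's (implicit) argument exactly: the paper obtains Proposition \ref{propo44} precisely by substituting the Lemma's closed forms for $g_{k}(\omega)/\sin^{2}\omega$ into the $\beta_{k}(f)$ of Proposition \ref{prop42}, recognising the $\frac{1}{1-\cos\omega}$ contribution as $M(f)$, and splitting the sum over $k$ by parity. Your side remark that the boundary term reduces to $-f(\pi)F$ under the hypothesis $f(\omega)=O(\omega^{3})$ is also accurate, though, as you note, not needed for the statement.
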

This is a useful formula because it does not involve auxiliary functions and the coefficients of the $c_{k}^{2}$ do not depend on the convex set.

\section{Some applications of the integral formulas}

\subsection{Hurwitz functions}\label{HF}
In formula \eqref{set23x} it does not appear the Fourier coefficients of the support function of the compact set $K$, $a_{k}$, $b_{k}$ but only the quantities~$c_{k}^{2}=a_{k}^{2}+b_{k}^{2}$  for $k\geq 2$. So it will be interesting to see how  $c_{k}^{2}$ depends on the geometry of $K$. In fact Hurwitz in \cite{Hurwitz1902} found a formula relating the $c_{k}^{2}$ with the length of $\partial K$  and the integral outside $K$ of an elementary function of the visual angle. By a direct application of formula \eqref{set23x} we can prove
\begin{teorema}[Hurwitz, \cite{Hurwitz1902}]\label{hurwitz}
Let $K$ be a compact convex set  with boundary of class~$C^{2}$ and length~$L$. 
Let $c_{k}^{2}=a_{k}^{2}+b_{k}^{2}$ where $a_{k}, b_{k}$ are the Fourier coefficients of the support function of $K$. 
For the functions $f_{m}(\omega)$ given by
\begin{equation}\label{nov10d}
f_{m}(\omega)=-2\sin\omega+\frac{m+1}{m-1}\sin((m-1)\omega)-\frac{m-1}{m+1}\sin((m+1)\omega),
\end{equation}
we have\footnote{There is a misprint with the sign in Hurwitz's paper. 
Moreover the $c_{k}$ coefficients appearing in this formula are different from those in Hurwitz's paper because the latter correspond to the  Fourier series of the curvature radius function. In fact $\alpha_{k}=(1-k^{2})a_{k}$, $\beta_{k}=(1-k^{2})b_{k}$ where $\alpha_{k}$, $\beta_{k}$ are the Fourier coefficients of the radius of curvature.}
\begin{equation}\label{nov1}
\int_{P\not\in K} f_{m}(\omega)\,dP=L^2 + (-1)^m\, \pi^2 (m^2 - 1) c_m^2, \quad m\geq 2.
\end{equation}
\end{teorema}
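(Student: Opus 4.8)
The plan is to apply Proposition \ref{propo44} directly to each function $f_m(\omega)$ defined in \eqref{nov10d} and to verify that, after computing the universal factor $M(f_m)$ and the cosine integrals $\int_0^\pi f_m'(\omega)j\cos(j\omega)\,d\omega$, all but one of the coefficients of the $c_k^2$ collapse. First I would record that $f_m$ is a trigonometric polynomial in $\sin\omega$, $\sin((m-1)\omega)$, $\sin((m+1)\omega)$ chosen precisely so that $f_m(\omega)=O(\omega^3)$ as $\omega\to 0$ and so that $f_m(\pi)=0$; indeed, the Taylor expansions of the three sine terms at $0$ show the $\omega^1$ coefficients cancel by the identity $-2+\frac{m+1}{m-1}(m-1)-\frac{m-1}{m+1}(m+1)=-2+(m+1)-(m-1)=0$, and the $\omega^3$ coefficient survives. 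Hence $f_m$ meets the hypotheses of Theorem \ref{fundamental} and of Proposition \ref{propo44}, and the boundary term $-[f_m(\omega)F(\omega)]_0^\pi$ vanishes: at $\omega=\pi$ because $f_m(\pi)=0$ together with the $O((\omega-\pi)^4)$ behaviour of $F(\omega)\sin^2\omega$, and at $\omega=0$ because $f_m(\omega)F(\omega)=O(\omega^3)\cdot O(\omega^{-2})\to 0$.

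Next I would compute $f_m'(\omega)=-2\cos\omega+(m+1)\cos((m-1)\omega)-(m-1)\cos((m+1)\omega)$ and evaluate the two ingredients in \eqref{set23x}. For $M(f_m)=\int_0^\pi f_m'(\omega)/(1-\cos\omega)\,d\omega$ one uses the elementary antiderivative structure already exploited in the proof of Crofton's formula: writing $1-\cos\omega=2\sin^2(\omega/2)$ and using that $f_m'(\omega)$ is divisible by $1-\cos\omega$ (which follows from $f_m(\omega)=O(\omega^3)$ at $0$ and the analogous vanishing at the relevant points, or more directly from telescoping trigonometric identities), the integral reduces to an integral of a trigonometric polynomial over $[0,\pi]$, giving a concrete constant; I expect $M(f_m)=0$, mirroring the Crofton computation where $\beta_k$ also vanished. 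The key computation is the family of integrals $I_{m,j}:=\int_0^\pi f_m'(\omega)\,j\cos(j\omega)\,d\omega$: expanding $f_m'$ and using orthogonality $\int_0^\pi\cos(a\omega)\cos(j\omega)\,d\omega=\frac{\pi}{2}\delta_{a,j}$ for positive integers (plus the $\int_0^\pi\cos(j\omega)\,d\omega=0$ boundary cases), one finds $I_{m,j}$ is nonzero only when $j\in\{1,m-1,m+1\}$, and for those values it picks up $\frac{\pi}{2}j$ times the corresponding coefficient $-2$, $m+1$, $-(m-1)$ respectively. The crucial point is that in \eqref{set23x} the inner sum over $j$ runs only up to $k-1$ with a fixed parity, so $j=m-1$ and $j=m+1$ contribute to the coefficient of $c_k^2$ exactly when $k-1\ge m-1$ and $k-1\ge m+1$ respectively, i.e. they ``switch on'' at $k=m$ and $k=m+2$; this telescoping-in-$k$ is what leaves a single surviving $c_m^2$ term.

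Carrying this out: for the even-$k$ sum (when $m$ is even, so $j$ runs over odd values $1,3,\dots,k-1$), the $j=1$ term contributes $-2$ to every $c_k^2$ with $k\ge 2$ even, which combines with the $M(f_m)$ contribution; the $j=m-1$ term (odd since $m$ even) contributes $\frac{\pi}{2}(m-1)(m+1)$ only for $k\ge m$; and $j=m+1$ contributes $-\frac{\pi}{2}(m+1)(m-1)$ only for $k\ge m+2$. For $k>m+1$ all three reappear and cancel against each other and against $M(f_m)$; the only $k$ for which the cancellation is incomplete is $k=m$, where the $j=m-1$ term is present but the $j=m+1$ term is not, leaving a net $\pi\cdot 2\cdot\frac{\pi}{2}(m-1)(m+1)=\pi^2(m^2-1)$; the sign $(-1)^m$ and the parallel treatment for $m$ odd (where one uses the odd-$k$ sum with $j$ even) come out of bookkeeping the parities and signs in \eqref{set23x}. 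The main obstacle is purely organizational: keeping straight, for each parity of $m$, which residue class of $k$ the relevant $j$-values belong to and at which threshold in $k$ each term turns on, so that the telescoping is transparent rather than a morass of cases; once the threshold structure is isolated the cancellation is immediate and the stated formula \eqref{nov1} follows, with the $L^2$ coefficient coming from $\frac{L^2}{2\pi}M(f_m)$ plus the ubiquitous $j=1$ contribution to the even-$k$ sum reorganized, which I expect to consolidate to exactly $L^2$.
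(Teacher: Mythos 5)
Your overall strategy is the paper's: kill the boundary term, compute $M(f_m)$ and the integrals $\int_0^\pi f_m'(\omega)\,j\cos(j\omega)\,d\omega$, and let the threshold structure in $k$ produce a single surviving $c_m^2$. Your values for the cosine integrals are right (orthogonality of cosines on $[0,\pi]$ gives exactly the paper's $-\pi\delta_{1,j}-\tfrac{\pi}{2}(m^2-1)(\delta_{j,m+1}-\delta_{j,m-1})$, which the paper obtains instead by integrating by parts against $f_m''$), and the boundary-term argument is fine. But there is a genuine error at the one point you did not compute: you ``expect $M(f_m)=0$'', and it is not. In fact
$$
f_m'(\omega)=2(1-\cos\omega)\Bigl(1+2\sum_{j=1}^{m-1}j\cos(j\omega)+(m-1)\cos(m\omega)\Bigr),
$$
so $M(f_m)=\int_0^\pi f_m'(\omega)/(1-\cos\omega)\,d\omega=2\pi$ (for $m=2$, e.g., $f_2'=4\cos\omega\sin^2\omega$ and the integral is $2\pi$). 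The Crofton analogy misleads you: there it was the $\beta_k$, not $M$, that vanished.

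This is not a cosmetic slip, because $M(f_m)=2\pi$ is precisely what makes your own telescoping work. With $M(f_m)=0$, the coefficient of $c_k^2$ for even $k\neq m$ would be $\pi\bigl(0+2\cdot(-\pi)\bigr)=-2\pi^2\neq 0$ (your claim that for $k>m+1$ ``all three cancel against each other and against $M(f_m)$'' silently uses $M(f_m)=2\pi$), and the $L^2$ term would disappear, since $\tfrac{L^2}{2\pi}M(f_m)$ would be $0$. Your closing hope that the $j=1$ contributions ``consolidate to exactly $L^2$'' has no mechanism behind it: in \eqref{set23x} those contributions multiply the $c_k^2$, not $L^2$, and nothing converts $\sum_k c_k^2$ into $L^2$. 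With the correct value $M(f_m)=2\pi$ everything you describe does close up: the $L^2$ coefficient is $\tfrac{L^2}{2\pi}\cdot 2\pi=L^2$, the even-$k$ brackets are $2\pi-2\pi+\pi(m^2-1)\delta_{k,m}$, the odd-$k$ brackets are $-\pi(m^2-1)\delta_{k,m}$, and \eqref{nov1} follows with the sign $(-1)^m$ as you indicate. So the proof is repairable, but as written the key constant is wrong and the stated derivation of the $L^2$ term is invalid.
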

\begin{proof}
In order to apply formula \eqref{set23x} we need to compute $[f_{m}(\omega)F(\omega)]_{0}^{\pi}$, $M(f_{m})$ and the integrals $\int_{0}^{\pi}f'_{m}(\omega)\cos(j\omega)\,d\omega$, for $j$ integer. 

First of all we have $[f_{m}(\omega)F(\omega)]_{0}^{\pi}=0$ since by  
\eqref{nnn}, 
$$
\lim_{\omega\to 0}f_{m}(\omega)F(\omega)=c \lim_{\omega\to 0}\frac{f_{m}(\omega)}{\sin^{2}\omega}=0.
$$ 
For  $M(f_{m})$ we need the  equalities
\begin{equation*}\label{nov10b}
f'_{m}(\omega)=2(1-\cos\omega)(1+2\sum_{j=1}^{m-1}j\cos(j\omega) + (m-1)\cos(m\omega)), \quad m\geq 2,
\end{equation*}  
which are obtained by direct computation using $2\cos(\omega)\cos(j\omega)=\cos(j+1)\omega+\cos(j-1)\omega.$ 
Then 
$$
M(f_{m})=\int_{0}^{\pi}{f'_{m}(\omega)\over 1-\cos\omega}\,d\omega=
2\int_{0}^{\pi}\biggl(1+2\sum_{j=1}^{m-1}j \cos(j\omega) + (m-1)\cos(m\omega)\biggr) \,d\omega =2\pi.
$$
Finally
\begin{equation*}
\begin{split}
\int_{0}^{\pi}f'_{m}(\sin(j\omega))'\,d\omega&=-\int_{0}^{\pi}f''_{m}\sin(j\omega)\,d\omega\\*[5pt]
&=-\int_{0}^{\pi}\biggl(2\sin\omega+2(m^2-1)\cos(m\omega)\sin\omega\biggr)\sin(j\omega)\,d\omega\\*[5pt]
&=-\pi\delta_{1,j}-2(m^2-1)\int_{0}^{\pi}\sin\omega\cos(m\omega)\sin(j\omega)\,d\omega\\*[5pt]
&=-\pi\delta_{1,j}-2(m^2-1)\frac{\pi}{4}(\delta_{j,m+1}-\delta_{j,m-1}),
\end{split}
\end{equation*}
where $\delta_{ij}=0$ for $i\neq j$ and $\delta_{ii}=1$.

Hence, if $k$ is even, 
\begin{equation*}
2\sum_{j=1,\, \textrm{odd}}^{k-1}\int_{0}^{\pi}f'_{k}(\sin(j\omega))'\,d\omega=-2\pi-(m^2-1)\pi(-\delta_{k-1,m-1})=-2\pi+(k^2-1)\pi\delta_{k,m}
\end{equation*}
since 
$$
\delta_{j+2,m+1}-\delta_{j, k-1}=0, \quad j=1,\dots,k-1.
$$
And, if $k$ is odd, 
\begin{equation*}
2\sum_{j= 2,\, \textrm{even}}^{k-1}\int_{0}^{\pi}f'_{m}(\sin(j\omega))'\,d\omega=-(m^2-1)\pi(-\delta_{k-1,m-1})=(m^2-1)\pi\delta_{k,m}.
\end{equation*}
Substituting in \eqref{set23x} 
we have
\begin{equation*}
\begin{split}
\int_{P\notin K}f_{m}(\omega)\,dP
&=L^2+\pi\sum_{k\geq 2,\, \textrm{even}}c_{k}^2
\biggl(2\pi-2\pi+(k^2-1)\pi\delta_{k,m}\biggr)\\*[5pt]
&\quad + \pi\sum_{k\geq 3,\, \textrm{odd}}c_{k}^2
\biggl(-(k^2-1)\pi\delta_{k,m}\biggr)\\*[5pt]
&= L^2+(-1)^m\pi^2c_{m}^2(m^2-1).
\end{split}
\end{equation*}
And the proof is finished.
\end{proof}
The theorem shows that the 	quantities $c_{m}^{2}$ are invariant with respect to euclidean motions of $K$.

Notice that the functions $f_{m}$ are related to the functions $g_{m}$ introduced in \eqref{nov10e}
by 
$$
g_{m}(\omega)=1+\frac{(-1)^{m}}{2}(f'_{m}(\omega)+2\cos(\omega)).
$$
\subsection{Masotti integral formula}
In \cite{masotti2} Masotti gives without proof a Crofton type formula evaluating  $\int_{P\notin K}(\omega^{2}-\sin^{2}\omega)\, dP$.  We will derive here Masotti's formula from \eqref{set23x}. 

Consider the function $f(\omega)=\omega^{2}-\sin^{2}\omega$ which clearly satisfies the hypothesis of  Theorem \ref{fundamental}, and let us compute $[f(\omega)F(\omega)]_{0}^{\pi}$, $M(f)$ and the integrals $\int_{0}^{\pi}f'(\omega)\cos(j\omega)\,d\omega$, for $j$ integer. 

We have
$$
\lim_{\omega\rightarrow \pi} f(\omega)F(\omega)=\pi^{2}F
$$
and
\begin{equation*}
\lim_{\omega\rightarrow 0} f(\omega)F(\omega)=\lim_{\omega\rightarrow 0}\frac{\omega^{2}-\sin^{2}\omega}{\sin^{2}\omega}\left(\frac{L^{2}}{2\pi}(1+\cos\omega)+\pi\sum_{k\geq 2}c_{k}^{2}g_{k}(\omega)\right)= 0,
\end{equation*}
since the term inside the parentheses is bounded. 
Hence, $\left[f(\omega)F(\omega)\right]_{0}^{\pi}=\pi^{2}F.$

On the other hand 
\begin{multline*}
M(f)=\int_{0}^{\pi}\frac{f'(\omega)}{1-\cos\omega}\,d\omega= \int_{0}^{\pi}\frac{2\omega-\sin(2\omega)}{1-\cos\omega}\,d\omega \\ 
= \left[\sin^{2}(\omega/2)-3\cos^{2}(\omega/2)-2\omega\cot(\omega/2)\right]_{0}^{\pi}=8.
\end{multline*}
Moreover, for $j\neq 2$, 
\begin{equation*}
\begin{split}
&\int_{0}^{\pi}f'(\omega)\cos(j\omega)\,d\omega=\int_{0}^{\pi}(2\omega-\sin(2\omega))\cos(j\omega)\,d\omega\\*[5pt]
=&\left[\frac{2}{j^{2}}(\cos(j\omega)+j\omega\sin(j\omega))-\frac{\cos((j-2)\omega)}{2(j-2)}+\frac{\cos((j+2)\omega)}{2(j+2)}\right]_{0}^{\pi}=\frac{8(1-(-1)^{j})}{j^{2}(j^{2}-4)},
\end{split}
\end{equation*}
and
$$
\int_{0}^{\pi}(2\omega-\sin(2\omega))\cos(2\omega)\,d\omega=0.
$$
It follows that
$$
\sum_{j=1,\, \textrm{odd}}^{k-1}\int_{0}^{\pi}f'(\omega)\cos(j\omega)\,d\omega=\sum_{j=1,\, \textrm{odd}}\frac{16}{j(j^{2}-4)}=\frac{4k^{2}}{1-k^{2}}.
$$
Summing up we obtain
\begin{teorema}[Masotti, \cite{masotti2}]\label{masottides}
Let $K$ be a compact convex set of area $F$ with boundary of class $C^{2}$ and length $L$. 
Let $c_{k}^{2}=a_{k}^{2}+b_{k}^{2}$ where $a_{k}$, $b_{k}$ are the Fourier coefficients of the support function of $K$. 
Then
\begin{equation}\label{20jb}
\int_{P\notin K}(\omega^{2}-\sin^{2}\omega)\,dP=-\pi^{2}F+\frac{4L^{2}}{\pi}+8\pi\sum_{k\geq 2,\,\textrm{even}}\left(\frac{1}{1-k^{2}}\right)c_{k}^{2}.
\end{equation}
Moreover the equality 
\begin{equation*}
\int_{P\notin K}(\omega^{2}-\sin^{2}\omega)\,dP=-\pi^{2}F+\frac{4L^{2}}{\pi}
\end{equation*}
holds if and only if the compact convex set $K$ has  constant width.
\end{teorema}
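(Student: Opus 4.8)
The plan is to apply Proposition \ref{propo44} directly to the function $f(\omega)=\omega^{2}-\sin^{2}\omega$. This $f$ is admissible for Theorem \ref{fundamental}: near $\omega=0$ one has $\omega^{2}-\sin^{2}\omega=\tfrac13\omega^{4}+\cdots=O(\omega^{4})$, so in particular $f(\omega)=O(\omega^{3})$. Thus formula \eqref{set23x} applies and the whole argument reduces to evaluating the three universal ingredients occurring there: the boundary term $\bigl[f(\omega)F(\omega)\bigr]_{0}^{\pi}$, the constant $M(f)=\int_{0}^{\pi}f'(\omega)/(1-\cos\omega)\,d\omega$, and the cosine integrals $\int_{0}^{\pi}f'(\omega)\cos(j\omega)\,d\omega$ for integer $j$.

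For the boundary term, at $\omega=\pi$ we have $F(\pi)=F$ and $f(\pi)=\pi^{2}$, while at $\omega=0$ the factor $F(\omega)$ blows up like $1/\sin^{2}\omega$ by \eqref{areaF} but the factor $f(\omega)=\omega^{2}-\sin^{2}\omega=O(\omega^{4})$ overpowers it, so $\lim_{\omega\to 0}f(\omega)F(\omega)=0$; hence $\bigl[f(\omega)F(\omega)\bigr]_{0}^{\pi}=\pi^{2}F$, contributing $-\pi^{2}F$. A direct primitive computation gives $M(f)=8$, which produces the term $\tfrac{4L^{2}}{\pi}$. For the cosine integrals, note $f'(\omega)=2\omega-\sin(2\omega)$; integrating by parts (or using an explicit primitive) one finds that for \emph{even} $j$ the integral vanishes — including the resonant value $j=2$, which must be checked separately — and that for \emph{odd} $j$ it equals $16/\bigl(j^{2}(j^{2}-4)\bigr)$. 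Consequently, in \eqref{set23x} all the odd-$k$ contributions drop out, and for even $k$ the inner sum $\sum_{j=1,\ \mathrm{odd}}^{k-1} j\int_{0}^{\pi}f'(\omega)\cos(j\omega)\,d\omega=\sum_{j=1,\ \mathrm{odd}}^{k-1}16/\bigl(j(j^{2}-4)\bigr)$ is a finite sum that, after a partial-fraction decomposition, telescopes to $4k^{2}/(1-k^{2})$.

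Assembling these pieces in \eqref{set23x}: the $L^{2}$-coefficient is $M(f)=8$, giving $\tfrac{4L^{2}}{\pi}$; the coefficient of $c_{k}^{2}$ for even $k$ is $\pi\bigl(8+2\cdot\tfrac{4k^{2}}{1-k^{2}}\bigr)=\tfrac{8\pi}{1-k^{2}}$; and the boundary term contributes $-\pi^{2}F$. This is exactly \eqref{20jb}. For the characterization of equality, observe that $\tfrac{1}{1-k^{2}}<0$ for every even $k\geq 2$ while $c_{k}^{2}\geq 0$, so $\sum_{k\geq 2,\ \mathrm{even}}\tfrac{1}{1-k^{2}}c_{k}^{2}$ vanishes if and only if $c_{k}=0$ for every even $k\geq 2$, i.e. $a_{k}=b_{k}=0$ for all even $k\geq 2$. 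By the description of constant width recalled in the Preliminaries ($p(\varphi)+p(\varphi+\pi)$ constant $\iff a_{n}=b_{n}=0$ for all even $n>0$), this happens precisely when $K$ has constant width, giving the stated equivalence. The whole argument is essentially bookkeeping on top of Proposition \ref{propo44}; the only mildly delicate points are the separate verification that $\int_{0}^{\pi}f'(\omega)\cos(2\omega)\,d\omega=0$ and the telescoping of the odd-$j$ sum, both of which are elementary.
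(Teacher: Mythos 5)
Your proposal is correct and follows essentially the same route as the paper: apply Proposition \ref{propo44} (formula \eqref{set23x}) to $f(\omega)=\omega^{2}-\sin^{2}\omega$, evaluate the boundary term $\pi^{2}F$, the constant $M(f)=8$, and the cosine integrals (zero for even $j$, $16/(j^{2}(j^{2}-4))$ for odd $j$), then sum the odd-$j$ terms to $4k^{2}/(1-k^{2})$ and assemble. The equality characterization via $c_{k}=0$ for even $k\geq 2$ being equivalent to constant width is also exactly the paper's (implicit) argument.
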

Besides the obvious inequality
\begin{equation*}
\int_{P\notin K}(\omega^{2}-\sin^{2}\omega)\,dP\leq -\pi^{2}F+\frac{4L^{2}}{\pi}
\end{equation*}
that follows from \eqref{20jb}, Santal\'o states in \cite{santalo} the lower bound
\begin{equation}\label{9juliol}
\int_{P\notin K}(\omega^{2}-\sin^{2}\omega)\,dP\geq  (16-\pi^{2})F,
\end{equation}
with equality only for circles.
We will improve now this last inequality.

\begin{teorema} 
Under the same hypothesis that in Theorem \ref{masottides} one has
\begin{equation}\label{26abr}
-\pi^{2}F+\frac{4L^{2}}{\pi}-\frac{4}{3}\Bigl(H-\frac{L^{2}}{\pi}\Bigr)\leq \int_{P\notin K}(\omega^{2}-\sin^{2}\omega)\,dP\leq -\pi^{2}F+\frac{4L^{2}}{\pi}, 
\end{equation}
where $H=\lim_{\omega \to 0}F(\omega)\sin^2\omega$ is given in \eqref{nnn}.
Equality in the left hand side holds if and only if $\partial K$ is a circle or a curve parallel to an astroid.
\end{teorema}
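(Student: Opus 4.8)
The plan is to read off both inequalities from Masotti's formula (Theorem~\ref{masottides}) combined with the asymptotic identity \eqref{nnn}. Formula \eqref{20jb} gives
\[
\int_{P\notin K}(\omega^{2}-\sin^{2}\omega)\,dP=-\pi^{2}F+\frac{4L^{2}}{\pi}+8\pi\sum_{k\geq 2,\,\textrm{even}}\frac{c_{k}^{2}}{1-k^{2}},
\]
and \eqref{nnn} gives $H-\dfrac{L^{2}}{\pi}=2\pi\displaystyle\sum_{k\geq 2,\,\textrm{even}}c_{k}^{2}$. The upper bound is then immediate: every summand $c_{k}^{2}/(1-k^{2})$ with $k\geq 2$ is nonpositive, so $\int_{P\notin K}(\omega^{2}-\sin^{2}\omega)\,dP\leq -\pi^{2}F+4L^{2}/\pi$.

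For the lower bound I would substitute these two identities into the asserted inequality; after cancelling the common term $-\pi^{2}F+4L^{2}/\pi$ and dividing by $8\pi$, what remains to be shown is
\[
\sum_{k\geq 2,\,\textrm{even}}\Bigl(\frac{1}{1-k^{2}}+\frac{1}{3}\Bigr)c_{k}^{2}\geq 0 .
\]
Here $\dfrac{1}{1-k^{2}}+\dfrac{1}{3}=\dfrac{k^{2}-4}{3(k^{2}-1)}$ vanishes for $k=2$ and is strictly positive for every even $k\geq 4$ (equivalently, $\tfrac{1}{k^{2}-1}\leq\tfrac13$, with equality only at $k=2$), so the sum consists of nonnegative terms and the lower bound follows. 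Writing $F$ through the coefficients as $F=\frac{L^{2}}{4\pi}-\frac{\pi}{2}\sum_{k\geq 2}(k^{2}-1)c_{k}^{2}$ one checks moreover that this lower bound is never smaller than Santal\'o's bound \eqref{9juliol}, and strictly larger unless $K$ is a circle, so it is a genuine improvement.

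For the equality case of the first inequality, the computation above forces $(k^{2}-4)c_{k}^{2}=0$ for all even $k$, i.e.\ $c_{k}=0$ for every even $k\geq 4$. Taking the Steiner point as origin ($c_{1}=0$) and noting that neither the integral nor $H$ involves the odd harmonics, the relevant extremal shapes are those with support function $p(\varphi)=a_{0}+a_{2}\cos 2\varphi+b_{2}\sin 2\varphi$. If $a_{2}=b_{2}=0$ this is a circle. Otherwise, writing $a_{2}\cos 2\varphi+b_{2}\sin 2\varphi=\lambda\cos 2(\varphi-\varphi_{0})$ with $\lambda>0$, the radius of curvature is $\rho(\varphi)=p+p''=a_{0}-3\lambda\cos 2(\varphi-\varphi_{0})$, and the $\mathcal{C}^{2}$ convexity hypothesis forces $a_{0}>3\lambda$; comparing with the envelope description of the astroid $x^{2/3}+y^{2/3}=c^{2/3}$ (the envelope of the lines $x\cos\varphi+y\sin\varphi=\tfrac{c}{2}\sin 2\varphi$, up to a translation), one recognises these as precisely the curves parallel to an astroid — the astroid itself, $a_{0}=3\lambda$, being excluded since it has cusps. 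Conversely, for a circle or an astroid parallel all $c_{k}$ with $k\geq 3$ vanish, so equality holds.

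The inequalities are thus a short calculation once Theorem~\ref{masottides} and \eqref{nnn} are in hand; I expect the main obstacle to lie in the equality discussion, and specifically in the geometric step that identifies the support functions $a_{0}+a_{2}\cos 2\varphi+b_{2}\sin 2\varphi$ with $a_{0}>3\sqrt{a_{2}^{2}+b_{2}^{2}}$ as exactly the convex curves parallel to an astroid, via the support function of the astroid and of its outer parallels.
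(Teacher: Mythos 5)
Your proof is correct and is essentially the paper's own: both bounds are read off Masotti's formula \eqref{20jb} combined with \eqref{nnn}, using that $\frac{1}{k^{2}-1}\leq\frac{1}{3}$ for even $k\geq 2$ with equality only for $k=2$. The only difference is that you carry out explicitly the identification of the support functions $a_{0}+a_{2}\cos 2\varphi+b_{2}\sin 2\varphi$ with circles and curves parallel to an astroid, which the paper delegates to \cite{CufiRev}; note that at the equality step both you and the paper pass from the condition actually forced by the computation (namely $c_{k}=0$ for even $k\geq 4$) to this form of $p$, even though the odd harmonics are not constrained by it.
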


\begin{proof}
For the left-hand side just write 
$$
\int_{P\notin K}(\omega^{2}-\sin^{2}\omega)\,dP\geq -\pi^{2}F+\frac{4L^{2}}{\pi}-\frac{8\pi}{3}\sum_{k\geq 2,\, \textrm{even}} c_{k}^{2}=-\pi^{2}F+\frac{4L^{2}}{\pi}-\frac{4}{3}\Bigl(H-\frac{L^{2}}{\pi}\Bigr).
$$
Equality in the left-hand side holds 
if and only if the support function of $K$ with respect to the Steiner point is of the form $p(\varphi)=a_{0}+a_{2}\cos(2\varphi)+b_{2}\sin(2\varphi)$. This means that $\partial K$ is a circle or a curve parallel to an astroid (see for instance \cite{CufiRev}).
\end{proof}
In terms of the area $A$ of the pedal curve of $K$ with respect to the Steiner point we get 

\begin{coro}\label{masotticor}
Under the same hypothesis that in Theorem \ref{masottides}  one has
\begin{equation*}
\int_{P\notin K}(\omega^{2}-\sin^{2}\omega)\,dP\geq  (16-\pi^{2})F+\frac{32}{3}(A-F).
\end{equation*}
Equality  holds if and only if $\partial K$ is a circle or a curve parallel to an astroid.
\end{coro}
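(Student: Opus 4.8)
The plan is to read the inequality directly off Masotti's identity \eqref{20jb}, after rewriting $L^{2}$ and the pedal area $A$ in terms of the Fourier coefficients, so that the statement collapses to the positivity of a single series.

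First I would record the two translation identities. Taking the Steiner point as origin, the support function is $p(\varphi)=a_{0}+\sum_{k\geq 2}(a_{k}\cos k\varphi+b_{k}\sin k\varphi)$ with $a_{0}=L/(2\pi)$; Parseval applied to $p$ and to $p'$, together with \eqref{longarea}, gives (exactly as in the derivation of the formula for $F$ recalled earlier)
$$
A=\frac{L^{2}}{4\pi}+\frac{\pi}{2}\sum_{k\geq 2}c_{k}^{2},\qquad F=\frac{L^{2}}{4\pi}-\frac{\pi}{2}\sum_{k\geq 2}(k^{2}-1)c_{k}^{2}.
$$
Hence $A-F=\frac{\pi}{2}\sum_{k\geq 2}k^{2}c_{k}^{2}$ and $\frac{4L^{2}}{\pi}=16F+8\pi\sum_{k\geq 2}(k^{2}-1)c_{k}^{2}$.

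Next I would substitute these into \eqref{20jb}. A short computation gives
$$
\int_{P\notin K}(\omega^{2}-\sin^{2}\omega)\,dP-(16-\pi^{2})F-\frac{32}{3}(A-F)=8\pi\!\!\sum_{k\geq 3,\,\textrm{odd}}\!\!\frac{k^{2}-3}{3}c_{k}^{2}+8\pi\!\!\sum_{k\geq 2,\,\textrm{even}}\!\!\Bigl(\frac{k^{2}-3}{3}-\frac{1}{k^{2}-1}\Bigr)c_{k}^{2},
$$
where the even part has collected both the $(k^{2}-1)$-contribution coming from $\frac{4L^{2}}{\pi}$ and the $\frac{1}{1-k^{2}}$-term of Masotti's sum. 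It then remains to check that every coefficient is $\geq 0$: for odd $k\geq 3$ one has $\frac{k^{2}-3}{3}\geq 2$, while the function $k\mapsto\frac{k^{2}-3}{3}-\frac{1}{k^{2}-1}$ is increasing on $[2,\infty)$ and vanishes at $k=2$, hence is nonnegative for every even $k\geq 2$. This proves the inequality. Since the $k=2$ term contributes nothing in any case, equality forces $c_{k}=0$ for all $k\geq 3$; by the characterization already used in the proof of the preceding theorem (see \cite{CufiRev}) this means precisely that $\partial K$ is a circle or a curve parallel to an astroid, and conversely in those cases the displayed right-hand side vanishes.

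The only place that requires care is the bookkeeping in the substitution step, since three sums — over all $k\geq 2$, over even $k$, and the one weighted by $k^{2}$ coming from $A-F$ — must be merged correctly before the termwise positivity becomes visible; after that the argument is elementary. Alternatively one could start from the left-hand inequality in \eqref{26abr}, use \eqref{nnn} to write $H-\frac{L^{2}}{\pi}=2\pi\sum_{k\geq 2,\,\textrm{even}}c_{k}^{2}$ together with the same expressions for $F$ and $A-F$; this again reduces the claim to a termwise nonnegative series, and chaining that inequality with \eqref{26abr} reproduces the same equality case.
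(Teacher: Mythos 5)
Your proof is correct, but it follows a genuinely different route from the paper. The paper deduces the corollary in one line from the lower bound \eqref{26abr} by invoking two external inequalities from \cite{CufiRev}, namely $H-\frac{L^{2}}{\pi}\leq A-F$ and $\Delta\geq 3\pi(A-F)$, together with $L^{2}=4\pi F+\Delta$; the equality discussion is then inherited from these ingredients. You instead substitute the Parseval expressions $A=\frac{L^{2}}{4\pi}+\frac{\pi}{2}\sum_{k\geq 2}c_{k}^{2}$ and $F=\frac{L^{2}}{4\pi}-\frac{\pi}{2}\sum_{k\geq 2}(k^{2}-1)c_{k}^{2}$ (support function taken with respect to the Steiner point, which is exactly how $A$ is defined in the paper) directly into Masotti's exact formula \eqref{20jb}, obtaining the identity
\begin{equation*}
\int_{P\notin K}(\omega^{2}-\sin^{2}\omega)\,dP-(16-\pi^{2})F-\frac{32}{3}(A-F)
=8\pi\!\!\sum_{k\geq 3,\,\mathrm{odd}}\!\frac{k^{2}-3}{3}\,c_{k}^{2}
+8\pi\!\!\sum_{k\geq 2,\,\mathrm{even}}\!\Bigl(\frac{k^{2}-3}{3}-\frac{1}{k^{2}-1}\Bigr)c_{k}^{2},
\end{equation*}
whose coefficients I have checked: they vanish at $k=2$ and are strictly positive for every $k\geq 3$, so both the inequality and the equality case ($c_{k}=0$ for $k\geq 3$, i.e.\ circle or curve parallel to an astroid, by the same characterization from \cite{CufiRev} used for \eqref{26abr}) follow at once. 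What your approach buys is self-containedness and a transparent equality analysis: you need no inequalities beyond Parseval and \eqref{20jb}, and you actually prove a sharp identity rather than a chain of estimates; what the paper's route buys is brevity and the explicit link to \eqref{26abr}, which is what lets the authors observe that their bound improves Santal\'o's inequality \eqref{9juliol}. Your closing alternative via \eqref{26abr} and \eqref{nnn} is also correct and is essentially a hybrid of the two arguments.
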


\begin{proof}
It is a simple consequence of \eqref{26abr} and the two inequalities $H-\dfrac{L^{2}}{\pi}\leq A-F $ and  $\Delta\geq 3\pi (A-F)$ (see \cite{CufiRev}). 
\end{proof}

The above argument shows that  \eqref{26abr} improves  Santal\'o's inequality \eqref{9juliol}.

\section{Integral of powers of the sinus of the visual angle}

In \eqref{set23} we have seen that 
$$
\int_{P\notin K}\sin^{3}\omega\, dP=\frac{3}{4}L^{2}+\frac{9}{4}\pi^{2}c_{2}^{2}.
$$
In this section we compute the integral of $\sin^{m}(\omega)$ for integer values of $m$  greater than $3$.
We have that 
$\left[\sin^{m}(\omega)F(\omega)\right]_{0}^{\pi}=0,$ so
$$
\int_{P\notin K}\sin^{m}(\omega)\,dP=M(\sin^{m}(\omega))\frac{L^{2}}{2\pi}+\pi\sum_{k\geq 2}\beta_{k}(\sin^{m}\omega)c_{k}^{2},
$$
with $M$ and $\beta_{k}$ are given in \eqref{defMbeta}.

As for $j$ even $\cos(\omega)\cos(j\omega)$ is an odd function with respect $\pi/2$ it can be seen from \eqref{set23x} that $\beta_{k}(\sin^{m}\omega)=0$ for every $k$ odd.

When $K$ is a convex set of constant width, $c_{k}=0$ for every even value of $k$, and we get
$$
\int_{P\notin K}\sin^{m}(\omega)\,dP=M(\sin^{m}(\omega))\frac{L^{2}}{2\pi}.
$$
Since  $M(\sin^{m}\omega)$ does not depend on the convex set $K$ we can compute this constant 
applying the above formula to the unit circle centered at the origin. 
If $r$ is the distance to the origin of the point $P$  we have
$\sin(\omega)=2\sin(\omega/2)\cos(\omega/2)=2{\sqrt{r^{2}-1}/r^{2}},$
and so
\begin{equation*}
\begin{split}
\int_{P\not \in K}\sin^{m}(\omega)\, dP &= \int_{0}^{2\pi}\int_{1}^{\infty}\sin^{m}(\omega)r\, dr\, d\theta=2\pi\, 2^{m}\int_{1}^{\infty}\left({\sqrt{r^{2}-1}\over r^{2}}\right)^{m}r\, dr\\
&= 2^{m}\pi B\left({m\over 2}+1, {m\over 2}-1\right),
\end{split}
\end{equation*}
where
$B(x,y)$ is the Beta function.
Hence 
$$
M(\sin^{m}\omega)=2^{m-1}B\left(\frac{m}{2}+1,\frac{m}{2}-1\right)=2^{m-1}\frac{\Gamma(\frac{m}{2}+1)\Gamma(\frac{m}{2}-1)}{(m-1)!}.
$$

Using the relation 
$$
\Gamma(z)\Gamma\biggl(z+\frac{1}{2}\biggr)=2^{1-2z}\sqrt{\pi}\Gamma(2z)
$$
we obtain
$$
\Gamma\biggl(\frac{m}{2}+1\biggr)\Gamma\biggl(\frac{m}{2}-1\biggr)=\frac{\pi m!(m-1)!}{2^{2m-2}(m-2)\Gamma(\frac{m+1}{2})^2},
$$
and hence
\begin{equation}\label{10maig}
M(\sin^{m}(\omega))=\frac{\pi \,m!}{2^{m-1}(m-2)\Gamma(\frac{m+1}{2})^2}.
\end{equation}
Notice that $M(\sin^{m}\omega)$ decreases with $m$, its maximum value $3\pi/2$ is attained for $m=3$  and it behaves as $1/\sqrt{m}$  when $m$ tends to infinity. 

We have proved the following 
\begin{prop}
Let $K$ be a compact convex of constant width  with boundary of class $C^{2}$ and length $L$ then
\begin{equation*}\label{ctwidth}
\int_{P\notin K}\sin^{m}\omega\,dP=\frac{\pi \,m!}{2^{m-1}(m-2)\Gamma(\frac{m+1}{2})^2}\;  {L^{2}\over 2\pi}.
\end{equation*}
\end{prop}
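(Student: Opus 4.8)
The plan is to apply Proposition~\ref{prop42} with $f(\omega)=\sin^{m}\omega$. For $m\geq 3$ this function is $O(\omega^{3})$ as $\omega\to 0$, so the hypotheses of Theorem~\ref{fundamental}, hence of Proposition~\ref{prop42}, are met. First I would dispose of the boundary term: near $\omega=0$ the quantity $F(\omega)\sin^{2}\omega$ is bounded by~\eqref{nnn}, so $\sin^{m}(\omega)F(\omega)=\sin^{m-2}(\omega)\,\bigl(F(\omega)\sin^{2}\omega\bigr)\to 0$ because $m\geq 3$, and at $\omega=\pi$ the factor $\sin^{m}\omega$ already vanishes. Thus $\bigl[\sin^{m}(\omega)F(\omega)\bigr]_{0}^{\pi}=0$ and Proposition~\ref{prop42} gives
\[
\int_{P\notin K}\sin^{m}\omega\,dP=M(\sin^{m}\omega)\,\frac{L^{2}}{2\pi}+\pi\sum_{k\geq 2}\beta_{k}(\sin^{m}\omega)\,c_{k}^{2}.
\]

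Next I would argue that the whole sum vanishes under the constant width hypothesis. For even $k$ this is immediate, since $c_{k}=0$ by the Fourier characterization of constant width recalled in Section~2. For odd $k$ I would use the explicit shape of $\beta_{k}$ coming from~\eqref{set23x}, namely a combination of the integrals $\int_{0}^{\pi}f'(\omega)\cos(j\omega)\,d\omega$ with $j$ even. Since $f'(\omega)=m\sin^{m-1}(\omega)\cos\omega$ is antisymmetric about $\omega=\pi/2$ while $\cos(j\omega)$ is symmetric about $\pi/2$ for $j$ even, every such integral is $0$; hence $\beta_{k}(\sin^{m}\omega)=0$ for odd $k$. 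Therefore, for a convex set of constant width,
\[
\int_{P\notin K}\sin^{m}\omega\,dP=M(\sin^{m}\omega)\,\frac{L^{2}}{2\pi}.
\]

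It remains to evaluate the universal constant $M(\sin^{m}\omega)$, which by~\eqref{defMbeta} depends only on $m$ and not on $K$. I would compute it by applying the previous display to the unit circle, where $L=2\pi$. Writing $r$ for the distance of $P$ to the center one has $\sin\omega=2\sqrt{r^{2}-1}/r^{2}$, so
\[
\int_{P\notin K}\sin^{m}\omega\,dP=2\pi\,2^{m}\int_{1}^{\infty}\Bigl(\tfrac{\sqrt{r^{2}-1}}{r^{2}}\Bigr)^{m}r\,dr,
\]
and the substitution $s=1/r^{2}$ turns the integral into $\tfrac12 B\bigl(\tfrac{m}{2}-1,\tfrac{m}{2}+1\bigr)$, yielding $M(\sin^{m}\omega)=2^{m-1}B\bigl(\tfrac{m}{2}+1,\tfrac{m}{2}-1\bigr)=2^{m-1}\Gamma(\tfrac{m}{2}+1)\Gamma(\tfrac{m}{2}-1)/(m-1)!$.

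The final step, and the only one that is not essentially automatic, is to rewrite this in the stated closed form. For that I would use the Legendre duplication formula $\Gamma(z)\Gamma(z+\tfrac12)=2^{1-2z}\sqrt{\pi}\,\Gamma(2z)$ with $z=m/2$, together with $\Gamma(\tfrac{m}{2}+1)=\tfrac{m}{2}\Gamma(\tfrac{m}{2})$ and $\Gamma(\tfrac{m}{2}-1)=\tfrac{2}{m-2}\Gamma(\tfrac{m}{2})$, to obtain $\Gamma(\tfrac{m}{2}+1)\Gamma(\tfrac{m}{2}-1)=\pi\,m!\,(m-1)!\big/\bigl(2^{2m-2}(m-2)\Gamma(\tfrac{m+1}{2})^{2}\bigr)$, whence $M(\sin^{m}\omega)=\pi\,m!\big/\bigl(2^{m-1}(m-2)\Gamma(\tfrac{m+1}{2})^{2}\bigr)$ as in~\eqref{10maig}. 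Inserting this into the constant-width display gives the proposition. I expect the $\Gamma$-function bookkeeping to be the fussiest point; conceptually, the one thing to get right is the symmetry argument killing the odd-$k$ coefficients, after which everything is forced.
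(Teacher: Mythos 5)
Your proposal is correct and follows essentially the same route as the paper: apply Proposition~\ref{prop42} with vanishing boundary term, kill the odd-$k$ coefficients by the antisymmetry of $(\sin^{m}\omega)'$ about $\pi/2$ against $\cos(j\omega)$ for $j$ even, use constant width to kill the even-$k$ terms, and evaluate $M(\sin^{m}\omega)$ on the unit circle via the Beta function and the duplication formula, exactly as in \eqref{10maig}. The only difference is that you spell out the boundary-term and symmetry arguments that the paper leaves implicit.
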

For general convex sets 
we have 
\begin{teorema} \label{teoremasinus}
Let $K$ be a compact convex set with boundary of class $C^{2}$ and length $L$. 
Write $c_{k}^{2}=a_{k}^{2}+b_{k}^{2}$ where $a_{k}$, $b_{k}$ are the Fourier coefficients of the support function of $K$. Then
\begin{equation}\label{formulasinus}
\int_{P\notin K}\sin^{m}\omega\,dP=M(\sin^{m}\omega) {L^{2}\over 2\pi}+
{m!\pi^{2}\over 2^{m-1}(m-2)}\sum_{k\geq 2, \,\textrm{even}}{(-1)^{\frac{k}{2}+1}(k^{2}-1)\over \Gamma({m+1+k\over 2})\Gamma({m+1-k\over 2})}c_{k}^{2},
\end{equation}
where $M(\sin^{m}(\omega))$ is given in \eqref{10maig}. For $m$ odd the index $k$ in the sum runs only from $2$ to $m-1$.
\end{teorema}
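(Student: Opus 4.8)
The plan is to feed $f(\omega)=\sin^{m}\omega$ into the first integral formula \eqref{ft31}. Since $\sin^{m}\omega=O(\omega^{m})$ and $m\ge 3$, the hypothesis $f(\omega)=O(\omega^{3})$ of Theorem~\ref{fundamental} holds, so
\begin{multline*}
\int_{P\notin K}\sin^{m}\omega\,dP=\left(\int_{0}^{\pi}\sin^{m-3}\omega\,(1+\cos\omega)^{2}\,d\omega\right)\frac{L^{2}}{2\pi}\\
+\pi\sum_{k\ge 2}\left(\int_{0}^{\pi}\sin^{m-3}\omega\,h_{k}(\omega)\,d\omega\right)c_{k}^{2}.
\end{multline*}
Writing $(1+\cos\omega)^{2}=\tfrac32+2\cos\omega+\tfrac12\cos(2\omega)$ and replacing $h_{k}$ by its trigonometric-polynomial expression (the alternative form of $h_{k}$ displayed just after \eqref{hdek}), every coefficient becomes a linear combination of the elementary integrals
\begin{equation*}
J_{n}:=\int_{0}^{\pi}\sin^{m-3}\omega\cos(n\omega)\,d\omega=\frac{\pi\cos(n\pi/2)\,\Gamma(m-2)}{2^{m-3}\,\Gamma\!\left(\frac{m+n-1}{2}\right)\Gamma\!\left(\frac{m-n-1}{2}\right)},
\end{equation*}
the closed form being the standard evaluation of $\int_{0}^{\pi}\sin^{p}x\cos(nx)\,dx$ in terms of the $\Gamma$ function. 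Note that $J_{n}=0$ whenever $n$ is odd.

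I would first dispose of the easy ingredients. The coefficient of $L^{2}/(2\pi)$ is $\tfrac32 J_{0}+2J_{1}+\tfrac12 J_{2}$; here $J_{1}=0$, and $\tfrac32 J_{0}+\tfrac12 J_{2}$ collapses, after a couple of applications of $\Gamma(z+1)=z\Gamma(z)$, to $M(\sin^{m}\omega)$ as computed in \eqref{10maig}. (This is forced in any case, since \eqref{ft31} and \eqref{set23x} are the same identity rewritten, so the coefficient of $L^{2}/(2\pi)$ has to be $M(f)$.) For $k$ odd the three frequencies $k-2,k,k+2$ appearing in $h_{k}$ are all odd and $\int_{0}^{\pi}\sin^{m-3}\omega\cos\omega\,d\omega=0$, so the coefficient of $c_{k}^{2}$ vanishes; this accounts for the restriction to even indices. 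If moreover $m$ is odd, then for even $k\ge m+1$ the number $\tfrac{m-k+1}{2}$ is a non-positive integer, hence $1/\Gamma(\tfrac{m-k+1}{2})=0$ and the sum stops at $k=m-1$.

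The core of the matter is the coefficient of $c_{k}^{2}$ for even $k$. There $(-1)^{k}=1$ and the $2\cos\omega$ term contributes $0$, leaving $\tfrac14\bigl[(k+1)^{2}J_{k-2}+(k-1)^{2}J_{k+2}-2(k^{2}-3)J_{k}\bigr]$. Substituting $\cos((k\pm2)\pi/2)=-(-1)^{k/2}$ and $\cos(k\pi/2)=(-1)^{k/2}$, and bringing the three $\Gamma$-products to the common denominator $\Gamma(\tfrac{m+k+1}{2})\Gamma(\tfrac{m-k+1}{2})$ by $\Gamma(z+1)=z\Gamma(z)$, this bracket becomes a fixed constant times
\begin{equation*}
-(k+1)^{2}(m+k-1)(m+k-3)-(k-1)^{2}(m-k-1)(m-k-3)-2(k^{2}-3)(m+k-1)(m-k-1).
\end{equation*}
Everything then hinges on the polynomial identity
\begin{multline*}
(k+1)^{2}(m+k-1)(m+k-3)+(k-1)^{2}(m-k-1)(m-k-3)\\
+2(k^{2}-3)(m+k-1)(m-k-1)=4m(m-1)(k^{2}-1),
\end{multline*}
which I would verify by direct expansion: after setting $u=m-1$ the first two summands are interchanged by $k\mapsto-k$, so their sum equals twice the part even in $k$, and a short computation reduces the left-hand side to $4u(u+1)(k^{2}-1)$. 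Reassembling the two contributions, and using $(m-3)!\,m(m-1)=m!/(m-2)$, one obtains formula \eqref{formulasinus}.

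I expect the only real difficulty to be clerical: keeping straight the values $\cos(n\pi/2)=(-1)^{n/2}$ at the three even frequencies $k-2,k,k+2$, tracking the shifts of the $\Gamma$-arguments without slips, and the final polynomial identity. Nothing here is conceptually delicate, but it is a calculation in which a dropped sign or a misplaced power of $2$ would be fatal, so it should be done with care.
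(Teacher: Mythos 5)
Your proposal is correct, and I checked the key computations: the closed form you quote for $J_{n}$ is exactly the paper's $I_{m-3,n}$, the combination $\tfrac32 J_{0}+\tfrac12 J_{2}$ does collapse to $M(\sin^{m}\omega)$ of \eqref{10maig}, and your polynomial identity expands (with $u=m-1$) to $4m(m-1)(k^{2}-1)$ as claimed, after which the factor $(m-3)!\,m(m-1)=m!/(m-2)$ reproduces \eqref{formulasinus}. The route, however, is different from the paper's. The paper does not feed $\sin^{m}\omega$ into \eqref{ft31} with the explicit $h_{k}$; it works from the second formula \eqref{set23x} (Proposition \ref{propo44}): the boundary term $[\sin^{m}\omega\,F(\omega)]_{0}^{\pi}$ vanishes, the sums $2\sum_{j\,\textrm{odd}}\int_{0}^{\pi}(\sin^{m}\omega)'\,j\cos(j\omega)\,d\omega$ are converted through the conjugate Dirichlet kernel \eqref{des22c} and an integration by parts into the integrals $I_{m,k}=\int_{0}^{\pi}\sin^{m}\omega\cos(k\omega)\,d\omega$, the constant $M(\sin^{m}\omega)$ is obtained almost for free by evaluating the formula on the unit disc, and the vanishing of the odd-$k$ coefficients is read off from the parity of $\cos\omega\cos(j\omega)$ about $\pi/2$. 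Both proofs ultimately rest on the same Gamma-function evaluation of $\int_{0}^{\pi}\sin^{p}\omega\cos(n\omega)\,d\omega$, so the difference is one of decomposition: yours is more direct and self-contained at the level of this theorem (no level-set area $F(\omega)$, no boundary term, no Dirichlet kernel), at the price of the three-frequency expansion of $h_{k}$ and an ad hoc polynomial identity; the paper instead reuses machinery (Proposition \ref{propo44}, the unit-circle evaluation of $M$) already set up for its other applications. One point worth making explicit in your write-up: when you pass $J_{k\pm 2}$ and $J_{k}$ to the common denominator $\Gamma(\frac{m+k+1}{2})\Gamma(\frac{m-k+1}{2})$ via $\Gamma(z+1)=z\Gamma(z)$, the degenerate cases in which an original $\Gamma$-argument is a non-positive integer (so that $J_{k+2}$ or $J_{k}$ is actually $0$) are still handled correctly, because the corresponding polynomial factor such as $(m-k-1)(m-k-3)$ vanishes or the pole survives in the final denominator; this is the same tacit convention the paper uses, and it is precisely what truncates the sum at $k=m-1$ for odd $m$.
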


\begin{proof}
From \eqref{set23x} it is clear that we need to compute
$$
M(\sin^{m}\omega) 
+ 2\sum_{j=1, \,\textrm{odd}}^{k-1}\int_{0}^{\pi}(\sin^{m}\omega)'j\cos(j\omega) \,d\omega, \quad k\; \textrm{even}.
$$
Using \eqref{des22c} and integrating by parts one gets
\begin{multline*}
\sum_{j=1,\,\textrm{odd}}^{k-1}\int_{0}^{\pi}(\sin^m\omega)' (\sin(j\omega))'\,d\omega\\
=
-\int_{0}^{\pi}(m(m-1)\sin^{m-2}\omega-m^2\sin^m\omega)\frac{1-\cos(k\omega)}{2\sin\omega}\,d\omega.
\end{multline*}
Denoting $I_{m,k}=\int_{0}^{\pi}\sin^m\omega\cos(k\omega)\,d\omega$ 
we have
\begin{multline}\label{sumais}
\sum_{j=1,\,\textrm{odd}}^{k-1}\int_{0}^{\pi}(\sin^m\omega)' (\sin(j\omega))'\,d\omega \\
=
-\frac{m(m-1)}{2}I_{m-3,0}+\frac{m^2}{2}I_{m-1,0}+\frac{m(m-1)}{2}I_{m-3,k}-\frac{m^2}{2}I_{m-1,k}.
\end{multline}
By induction on $m$ and using  known relations of the Gamma function it can be seen  that
$$
I_{m,k}=(-1)^{k/2}\frac{2^{-m}m! \pi}{\Gamma(1+\frac{m-k}{2})\Gamma(1+\frac{m+k}{2})},
$$ (see for instance \cite[p.~372]{grads}).
Performing the operation on the right-hand side of~\eqref{sumais} with these values of $I_{m,k}$ we obtain
$$
\sum_{j=1,\,\textrm{odd}}^{k-1}\int_{0}^{\pi}\!(\sin^m\omega)' (\sin(j\omega))'\,d\omega
\!=\!-\frac{1}{2}M(\sin^{m}\omega)+{m!\pi\over 2^{m}(m-2)}{(-1)^{\frac{k}{2}+1}(k^{2}-1)\over \Gamma({m+1+k\over 2})\Gamma({m+1-k\over 2})}.\!
$$
From this, formula \eqref{formulasinus} follows. 

When $m$ is odd and $k>m$ we have that $m+1-k$ is an even non-positive integer and hence $\Gamma({m+1-k\over 2})=\infty$. From this remark the last assertion of  the theorem is proved.
\end{proof}
For instance,  for the special cases $m=3$, $4$ and $5$ we obtain the equalities 
\begin{align*}
\int_{P\notin K}\sin^{3}\omega\, dP&=\frac{3}{4}L^{2}+\frac{9}{4}\pi^{2}c_{2}^{2},\\*[5pt]
\int_{P\notin K}\sin^4\omega\, dP&=\frac{4}{3\pi}L^2+ \pi\sum_{k=2,\mathrm{even}}^{{\infty}} \frac{24}{9-k^2}\,c_{k}^{2},\\*[5pt]
\int_{P\notin K}\sin^5\omega\, dP&=\frac{5}{16}L^2+\frac{5\pi^2}{4}c_{2}^2-\frac{25\pi^2}{16}c_{4}^2.
\end{align*}
To end with we make the following remark. 
\begin{enumerate}
\item[a)] If $m=2r$ the coefficient of $c_{k}^{2}$ in \eqref{formulasinus} for $k\leq r$ is positive if and only if $k/2$ is odd. For $k>r$ this coefficient is positive if and only if $r$ is odd.
\item[b)] If $m=2r-1$ the coefficient of $c_{k}^{2}$ vanishes for $k> m$.
\end{enumerate}
\medskip

In \cite{Hurwitz1902} Hurwitz computed the integral of $\sin^{3}(\omega)$ and the  integrals of the functions~$f_{m}(\omega)$ given in \eqref{nov10d} without any relationship between them.  We will show now that the integrals of the powers of the sinus of the visual angle are a linear combination of  the integrals of the functions $f_{m}$. 

\begin{prop} For a compact convex set $K$ with boundary of class $C^{2}$ and~$m\geq 3$, we have
$$
\int_{P\notin K}\sin^{m}(\omega)\,dP=\frac{m!}{2^{m-1}(m-2)}\sum_{p=1}^{\infty}\frac{(-1)^{p+1}}{\Gamma(\frac{m+1}{2}+p)\Gamma(\frac{m+1}{2}-p)}\cdot \int_{P\notin K}f_{2p}(\omega)\,dP,
$$
where the functions $f_{2p}(\omega)$ are given in \eqref{nov10d}.
\end{prop}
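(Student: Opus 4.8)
The plan is to derive the identity directly from the two closed formulas already proved: formula \eqref{formulasinus} of Theorem \ref{teoremasinus} for $\int_{P\notin K}\sin^{m}\omega\,dP$, and formula \eqref{nov1} of Theorem \ref{hurwitz} for $\int_{P\notin K}f_{m}(\omega)\,dP$ applied with the even index $2p$ in place of $m$. First I would rewrite the sum in \eqref{formulasinus} with $k=2p$, so that the $c_{k}^{2}$-part of $\int_{P\notin K}\sin^{m}\omega\,dP$ reads $\frac{m!\pi^{2}}{2^{m-1}(m-2)}\sum_{p\geq 1}\frac{(-1)^{p+1}(4p^{2}-1)}{\Gamma(\frac{m+1}{2}+p)\Gamma(\frac{m+1}{2}-p)}c_{2p}^{2}$, while Theorem \ref{hurwitz} gives $\int_{P\notin K}f_{2p}(\omega)\,dP=L^{2}+\pi^{2}(4p^{2}-1)c_{2p}^{2}$. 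Substituting this expression into the right-hand side of the claimed formula and splitting off the $L^{2}$-terms from the $c_{2p}^{2}$-terms, the $c_{2p}^{2}$-part matches the sum above term by term. Hence the proposition is equivalent to the scalar identity
$$
\frac{m!}{2^{m-1}(m-2)}\sum_{p\geq 1}\frac{(-1)^{p+1}}{\Gamma(\frac{m+1}{2}+p)\Gamma(\frac{m+1}{2}-p)}=\frac{M(\sin^{m}\omega)}{2\pi},
$$
which, by the value \eqref{10maig} of $M(\sin^{m}\omega)$, amounts to $\sum_{p\geq 1}\frac{(-1)^{p+1}}{\Gamma(\frac{m+1}{2}+p)\Gamma(\frac{m+1}{2}-p)}=\frac{1}{2\,\Gamma(\frac{m+1}{2})^{2}}$.

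The hard part is establishing this last identity; everything else is bookkeeping. I would set $a=\frac{m+1}{2}\geq 2$ and use $\Gamma(a+p)=\Gamma(a)(a)_{p}$ together with $\Gamma(a)/\Gamma(a-p)=(a-1)(a-2)\cdots(a-p)=(-1)^{p}(1-a)_{p}$; note that this makes the $p$-th term vanish precisely when $a-p$ is a non-positive integer, which is exactly the truncation that occurs for $m$ odd. Then
$$
\sum_{p\geq 0}\frac{(-1)^{p}}{\Gamma(a+p)\Gamma(a-p)}=\frac{1}{\Gamma(a)^{2}}\sum_{p\geq 0}\frac{(1-a)_{p}}{(a)_{p}}=\frac{1}{\Gamma(a)^{2}}\,{}_{2}F_{1}(1-a,1;a;1),
$$
and Gauss's summation theorem, which applies because $\mathrm{Re}\bigl(a-(1-a)-1\bigr)=2a-2>0$, evaluates this as $\frac{\Gamma(a)\Gamma(2a-2)}{\Gamma(2a-1)\Gamma(a-1)}=\frac12$. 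Removing the $p=0$ term $\Gamma(a)^{-2}$ gives $\sum_{p\geq 1}\frac{(-1)^{p+1}}{\Gamma(a+p)\Gamma(a-p)}=\Gamma(a)^{-2}-\frac12\Gamma(a)^{-2}=\frac12\Gamma(a)^{-2}$, which is exactly what is needed.

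Two minor points remain to be checked along the way. For $m$ even the general term of the series decays like $p^{-m}$, as one sees from the reflection formula $1/\Gamma(a-p)=(-1)^{p}\sin(\pi a)\Gamma(p+1-a)/\pi$ and Stirling, so the series converges absolutely and the interchange of summations is legitimate; for $m$ odd it is a finite sum, matching the last remark in Theorem \ref{teoremasinus}. Alternatively, since \eqref{formulasinus} and the proposed right-hand side are already seen to carry identical $c_{2p}^{2}$-coefficients and both are valid for every admissible $K$, one could instead fix the $L^{2}$-coefficient by testing both sides on a disc, where all $c_{k}$ vanish; but this reduces to the very same scalar identity, so the hypergeometric evaluation cannot be avoided.
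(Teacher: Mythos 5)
Your proposal is correct and follows essentially the same route as the paper: combine \eqref{formulasinus} with Hurwitz's formula \eqref{nov1} for $f_{2p}$, reduce everything to the scalar identity $\sum_{p\geq 1}\frac{(-1)^{p+1}}{\Gamma(\frac{m+1}{2}+p)\Gamma(\frac{m+1}{2}-p)}=\frac{1}{2\Gamma(\frac{m+1}{2})^{2}}$ (equivalently, that the $L^{2}$-coefficient vanishes), and evaluate it by Gauss's summation theorem for ${}_{2}F_{1}$ at $1$. The only differences are cosmetic (you include the $p=0$ term and work with ${}_{2}F_{1}(1-a,1;a;1)$, the paper shifts the index and uses ${}_{2}F_{1}(\frac{3-m}{2},1;\frac{3+m}{2};1)$), and your convergence remarks are a welcome extra precision.
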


\begin{proof}  
Substituting in \eqref{formulasinus} the value of $c_{k}^{2}$ given by \eqref{nov1} one gets 
\begin{equation}\label{coefldos}
\begin{aligned}
\int_{P\notin K}\sin^{m}\omega\,dP &=\frac{m!}{2^{m}(m-2)}\left(\frac{1}{\Gamma(\frac{m+1}{2})^{2}}-2\sum_{p=1}^{\infty}\frac{(-1)^{p+1}}{\Gamma(\frac{m+1}{2}+p)\Gamma(\frac{m+1}{2}-p)}\right)L^{2} \\*[5pt]
 &\quad+\frac{m!}{2^{m-1}(m-2)}\sum_{p=1}^{\infty}\frac{(-1)^{p+1}}{\Gamma(\frac{m+1}{2}+p)\Gamma(\frac{m+1}{2}-p)}\cdot \int_{P\notin K}f_{2p}(\omega)\,dP.
\end{aligned}
\end{equation}
Using the standard 
 notation for hypergeometric series we have
$$
2\sum_{p=1}^{\infty}\frac{(-1)^{p+1}}{\Gamma(\frac{m+1}{2}+p)\Gamma(\frac{m+1}{2}-p)}=2\frac{{}_{2}F_{1}\left(\frac{3-m}{2},1;\frac{3+m}{2};1\right)}{\Gamma(\frac{m+1}{2}+1)\Gamma(\frac{m+1}{2}-1)}
$$
and by the Gauss summation formula (see \cite[Vol III, p.~147]{GAUSS}) we obtain
$$
2\frac{{}_{2}F_{1}\left(\frac{3-m}{2},1;\frac{3+m}{2};1\right)}{\Gamma(\frac{m+1}{2}+1)\Gamma(\frac{m+1}{2}-1)}\!=\!\frac{2}{\Gamma(\frac{m+1}{2}+1)\Gamma(\frac{m+1}{2}-1)}\cdot \frac{\Gamma(m-1)\Gamma(\frac{m+3}{2})}{\Gamma(m)\Gamma(\frac{m+1}{2})}=\frac{1}{\Gamma(\frac{m+1}{2})^{2}}.
$$
Hence the coefficient of $L^{2}$ in \eqref{coefldos} vanishes.  
\end{proof}

\section{Extension of the Crofton and  Masotti  formulas and related inequalities}\label{sectionset}

In this section we consider the integral  
$$
\int_{P\notin K}(\omega^{m}-\sin^{m}\omega)\,dP,
$$ 
where $\omega$ is the visual angle of the convex set $K$ from the point $P$. For $m=1$ and $m=2$ these are the integrals appearing in Crofton's formula \eqref{croftonform} 
and in the Masotti integral formula \eqref{20jb},
 respectively.

For the general case,  we have by Proposition \ref{prop42}
\begin{equation}\label{ppp}
\int_{P\notin K}(\omega^{m}-\sin^{m}\omega)\,dP=-\pi^{m}F+M_{m}\frac{L^{2}}{2\pi}+\pi\sum_{k\geq 2}\beta_{k}c_{k}^{2},
\end{equation}
where $M_{m}=M(\omega^{m}-\sin^{m}\omega)$ and $\beta_{k}=\beta_{k}(\omega^{m}-\sin^{m}\omega)$ are given in \eqref{defMbeta}. The quantities  $M_{m}$  can be explicitly computed.   In fact $M(\sin^{m}\omega)$ is given in \eqref{10maig} and 
$$
M(\omega^{m})=2m (m-1)\pi^{m-2}\left({1\over m-2}+\sum_{k=1}^{\infty}{(-1)^{k}\pi^{2k}B_{2k}\over (m-2+2k)(2k)!}\right),
$$
where $B_{2k}$ are the Bernoulli numbers (see \cite[p.~189]{grads}). 
As the parenthesized expression tends to zero like $1/m^{2}$ when $m$ tends to infinity we see that $M(\omega^{m})$ behaves like $e^{m}$ and therefore  $M_{m}$ grows exponentially with $m$.
\medskip	

For $\beta_{k}$ recall that we have by Proposition \ref{propo44}
\begin{equation}\label{betask}
\beta_{k}=\begin{cases}\displaystyle M_{m}+2\sum_{j=1,\,\textrm{odd}}^{k-1}\int_{0}^{\pi}(\omega^{m}-\sin^{m}\omega)'j\cos(j\omega)\,d\omega, & \textrm{for}\; k\; \textrm{even},\\[.5cm]
\displaystyle -2\sum_{j=1,\,\textrm{even}}^{k-1}\int_{0}^{\pi}(\omega^{m})'j\cos(j\omega)\,d\omega, & \textrm{for}\; k\; \textrm{odd}.
\end{cases}
\end{equation}
Although the integrals appearing in the expression of the $\beta_{k}$
can be explicitly computed they are not easily handled. 

For instance in the case $m=3$ it can be seen that
\begin{equation*}
\begin{split}
\int_{P\notin K}(\omega^{3}-\sin^{3}\omega)\,dP&=-\pi^{3}F+\biggl(12\pi\ln(2)-\frac{3\pi}{2}\biggr)\frac{L^{2}}{2\pi}\\*[5pt]
&\quad+
12\pi^{2}\biggl(\ln(2)-\frac{19}{16}\biggr)c_{2}^{2}-6\pi^{2}\sum_{k\geq 3}\left(\Psi\left(\frac{k+1}{2}\right)+\gamma\right)c_{k}^{2},
\end{split}
\end{equation*}
where $\Psi(x)$ is the digamma function $\Psi(x)=(\ln \Gamma(x))'$, and $\gamma$ is the Euler--Masche\-roni constant. 

\subsection{Upper bounds} 
We obtain now an upper bound  for $\int_{P\notin K}(\omega^{m}-\sin^{m}\omega)\,dP$. 
For $m=3$, since $\Psi(x)>0$ for $x\geq 2$, we have 
\begin{equation}\label{30maig}
\int_{P\notin K}(\omega^{3}-\sin^{3}\omega)\,dP\leq -\pi^{3}F+\biggl(12\pi\ln(2)-\frac{3\pi}{2}\biggr)\frac{L^{2}}{2\pi}.
\end{equation}

For the general case  we obtain  the following result.

\begin{teorema}\label{upperbound} 
Let $K$ be a compact convex set with boundary of class~$\mathcal{C}^{2}$, area~$F$ and length of the boundary $L$, and let $\omega=\omega(P)$ be the visual angle from the point~$P$. Then
$$
\int_{P\notin K}(\omega^{m}-\sin^{m}\omega)\,dP\leq -\pi^{m}F+M_{m}\frac{L^{2}}{2\pi},\qquad m\geq 1,
$$
where $M_{m}=\displaystyle\int_{0}^{\pi}\frac{(\omega^{m}-\sin^{m}\omega)'}{1-\cos\omega}\,d\omega$. Equality holds only for circles. 
\end{teorema}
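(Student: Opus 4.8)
The plan is to read off the integral from the exact identity \eqref{ppp} of Proposition~\ref{prop42},
\begin{equation*}
\int_{P\notin K}(\omega^{m}-\sin^{m}\omega)\,dP=-\pi^{m}F+M_{m}\frac{L^{2}}{2\pi}+\pi\sum_{k\geq 2}\beta_{k}c_{k}^{2},
\end{equation*}
with $M_{m}=M(\omega^{m}-\sin^{m}\omega)$ and $\beta_{k}=\beta_{k}(\omega^{m}-\sin^{m}\omega)$ as in \eqref{defMbeta}. Since $c_{k}^{2}\geq 0$, the whole statement follows once one proves $\beta_{k}\leq 0$ for all $k\geq 2$, the equality discussion being immediate afterwards. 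For later use note that $f(\omega)=\omega^{m}-\sin^{m}\omega$ satisfies $f(\omega)=O(\omega^{m+2})$ as $\omega\to 0$, $f'(0)=0$, and, writing $f''=m(m-1)(\omega^{m-2}-\sin^{m-2}\omega)+m^{2}\sin^{m}\omega$, both $f'\geq 0$ and $f''\geq 0$ on $[0,\pi]$ (all from $\omega\geq\sin\omega\geq 0$). It is convenient to record that, integrating the two formulas of the Lemma with the conjugate Dirichlet kernel, the primitive $H_{k}(\omega):=\int_{\omega}^{\pi}g_{k}(\tau)/\sin^{2}\tau\,d\tau$ has the closed form $H_{k}(\omega)=(\cos\omega+(-1)^{k}\cos k\omega)/\sin\omega$, with $H_{k}(\pi)=0$; hence an integration by parts in $\beta_{k}=\int_{0}^{\pi}f'(\omega)g_{k}(\omega)/\sin^{2}\omega\,d\omega$ (the boundary terms vanish because $f'(0)=0$, $H_{k}(\pi)=0$ and $f'H_{k}\to 0$ at both ends) gives $\beta_{k}=\int_{0}^{\pi}f''(\omega)H_{k}(\omega)\,d\omega$.

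For $k$ odd the matter is elementary. By Proposition~\ref{propo44} (cf.\ \eqref{betask}) the contribution of $\sin^{m}\omega$ disappears and $\beta_{k}=-2m\sum_{j=2,\ \mathrm{even}}^{k-1}j\int_{0}^{\pi}\omega^{m-1}\cos(j\omega)\,d\omega$, so it suffices to show that $\int_{0}^{\pi}\omega^{m-1}\cos(j\omega)\,d\omega\geq 0$ for $j$ even. Using $\cos(j(\pi-\omega))=\cos(j\omega)$ one may replace $\omega^{m-1}$ by its symmetrisation $\Phi(\omega)=\tfrac12\bigl(\omega^{m-1}+(\pi-\omega)^{m-1}\bigr)$, which is convex and symmetric about $\pi/2$; expressing $\Phi$ as a superposition of the tent functions $(\omega-t)_{+}$ and $(t-\omega)_{+}$, $t\in[0,\pi]$, plus a constant, and using $\int_{0}^{\pi}(\omega-t)_{+}\cos(j\omega)\,d\omega=\int_{0}^{\pi}(t-\omega)_{+}\cos(j\omega)\,d\omega=(1-\cos jt)/j^{2}\geq 0$ for $j$ even while the constant contributes $0$, one gets $\int_{0}^{\pi}\omega^{m-1}\cos(j\omega)\,d\omega=\tfrac12\int_{0}^{\pi}\Phi\cos(j\omega)\,d\omega\geq 0$. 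Thus $\beta_{k}\leq 0$, strictly so when $m\geq 3$ (then $\Phi$ is genuinely convex and $1-\cos jt>0$ a.e.).

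The case $k$ even is the real point, because there $H_{k}$ changes sign on $(0,\pi)$ and the bare representation $\beta_{k}=\int_{0}^{\pi}f''H_{k}$ is not conclusive. I would attack it in one of two parallel ways. First directly: split $f''=m(m-1)(\omega^{m-2}-\sin^{m-2}\omega)+m^{2}\sin^{m}\omega$; the second summand pairs with $H_{k}$ to give $\int_{0}^{\pi}\sin^{m-1}\omega\,(\cos\omega+\cos k\omega)\,d\omega=\int_{0}^{\pi}\sin^{m-1}\omega\cos(k\omega)\,d\omega$, which is one of the integrals $I_{m-1,k}$ evaluated in the proof of Theorem~\ref{teoremasinus}, while the first summand is controlled as in the odd case. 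Alternatively, and more in the spirit of Masotti's original argument, use $\beta_{k}=M_{m}+2\sum_{j=1,\ \mathrm{odd}}^{k-1}j\int_{0}^{\pi}f'(\omega)\cos(j\omega)\,d\omega$ together with the Crofton identity $\int_{0}^{\pi}g_{k}(\omega)/(1+\cos\omega)\,d\omega=0$ (equivalently $\beta_{k}(\omega-\sin\omega)=0$): writing $f'(\omega)=(1-\cos\omega)\psi(\omega)$ turns $\beta_{k}$ into $\int_{0}^{\pi}\bigl(\psi(\omega)-\psi(0)\bigr)g_{k}(\omega)/(1+\cos\omega)\,d\omega$, and the integrals $\int_{0}^{\pi}f'(\omega)\cos(j\omega)\,d\omega$ admit a closed form (for $m=2$ they equal $16/(j^{2}(j^{2}-4))$, reproducing Masotti's $\beta_{k}=8/(1-k^{2})$; for $m=3$ one gets $\beta_{k}=-6\pi(\Psi(\tfrac{k+1}{2})+\gamma)$), whence $\beta_{k}\leq 0$ is verified. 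Establishing this negativity uniformly in $k$ and $m$ is the main obstacle: the terms $M_{m}$ and the $j=1$ contribution have the opposite sign to the bulk, so a genuine estimate — most plausibly through the convexity of $f$ combined with the explicit $H_{k}$ — is needed rather than a crude bound.

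Collecting the two cases, $\beta_{k}\leq 0$ for every $k\geq 2$, and \eqref{ppp} yields the asserted inequality. Equality means $\beta_{k}c_{k}^{2}=0$ for all $k\geq 2$; since for $m\geq 3$ every $\beta_{k}$ is strictly negative, this forces $c_{k}=0$ for all $k\geq 2$, i.e.\ (with the Steiner point as origin) the support function of $K$ is constant, so $K$ is a disc. For $m=1$ the statement is the identity \eqref{croftonform}, and for $m=2$ it is Theorem~\ref{masottides}, where equality holds precisely for sets of constant width.
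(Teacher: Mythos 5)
Your reduction to showing $\beta_{k}\le 0$ for $k\ge 2$ is exactly the paper's strategy, and your treatment of the odd case is correct: the closed form $H_{k}(\omega)=\bigl(\cos\omega+(-1)^{k}\cos k\omega\bigr)/\sin\omega$ is right, and the positivity of $\int_{0}^{\pi}\omega^{m-1}\cos(j\omega)\,d\omega$ for $j$ even follows from your convexity/tent-function argument (the paper gets the same sign information from a recurrence for $V_{r,j}=\int_{0}^{\pi}\omega^{r}\cos(j\omega)\,d\omega$; also note a small slip, the symmetrisation gives $\int_{0}^{\pi}\omega^{m-1}\cos(j\omega)\,d\omega=\int_{0}^{\pi}\Phi(\omega)\cos(j\omega)\,d\omega$ without the extra $\tfrac12$). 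But the proof is not complete: the case $k$ even, which you yourself flag as ``the main obstacle,'' is precisely the hard core of the theorem and is left unproved. Neither of your two sketched routes closes it: in the first, the term $m(m-1)\int_{0}^{\pi}(\omega^{m-2}-\sin^{m-2}\omega)H_{k}(\omega)\,d\omega$ with $H_{k}$ changing sign is \emph{not} ``controlled as in the odd case,'' since for $k$ even $H_{k}$ contains the positive singular part $\cot(\omega/2)$-type contribution coming from $1/(1-\cos\omega)$, and one must genuinely beat the positive terms $M_{m}$ and the $j=1$ contribution; in the second, the explicit formulas you quote only settle $m=2,3$, not general $m$ and $k$ even simultaneously.

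For comparison, the paper's proof supplies exactly the two ideas your outline lacks. First, it shows the even-indexed sequence is decreasing: writing $\psi=(\omega^{m}-\sin^{m}\omega)''$, which is non-negative and increasing on $[0,\pi]$ for $m\ge 4$, one has
\begin{equation*}
\beta_{k}-\beta_{k+2}=2\int_{0}^{\pi}\psi(\omega)\sin\bigl((k+1)\omega\bigr)\,d\omega>0,
\end{equation*}
proved by partitioning $[0,\pi]$ at the zeros $t_{j}=j\pi/(k+1)$ and pairing adjacent half-waves of the sine. Second, it reduces everything to $\beta_{2}<0$, i.e.\ to the estimate $\int_{0}^{\pi}\omega^{m-1}g(\omega)\,d\omega\le-\pi$ with $g(\omega)=\frac{1}{1-\cos\omega}+2\cos\omega$, which is obtained by showing this integral decreases in $m$ (a comparison of the positive and negative parts of $\omega^{m-1}(\omega-1)g(\omega)$ around the unique zero $\zeta$ of $g$) and checking a base case; the cases $m=1,2,3$ are handled by the already known explicit formulas. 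Without an argument of this kind (or some substitute) your proposal does not establish $\beta_{k}\le 0$ for even $k$, so the inequality, and with it the equality discussion, remains unproven for general $m$. Your remark that for $m=1$ equality is automatic and for $m=2$ it characterises constant width is a fair observation about the statement, but it does not affect the missing step.
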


\begin{rema}
Since $M_{1}=\pi$, $M_{2}=8$ and $M_{3}=(12\ln(2)-3\pi/2)$ this result agrees with Crofton formula~\eqref{croftonform} for $m=1$, it  is a generalization of the upper bounds for Masotti's integral given in~\eqref{26abr} for $m=2$ and of the upper bound given in~\eqref{30maig} for~$m=3$.  
\end{rema}

\begin{proof}
We shall see that $\beta_{k}\leq 0$, for $k\geq 2$. Writing $V_{r,j}=\int_{0}^{\pi}\omega^{r}\cos(j\omega)\,d\omega$
the following recurrence formula can be checked 
$$
V_{r,j}=\frac{r}{j^{2}}\biggl((-1)^{j}\pi^{r-1}-(r-1)V_{r-2,j}\biggr).
$$
This gives  $V_{r,j}\geq 0$ for $j$ even and $V_{r,j}\leq 0$ for $j$ odd. 
As for $k$ odd we have $\beta_{k}=-2m\sum_{j=2,\,\textrm{even}}^{k-1}jV_{m-1,j}$ it follows that $\beta_{k}\leq 0$ for $k$ odd. 

For $k$ even, integrating by parts, we get
$$
\beta_{k}-\beta_{k+2}=2\int_{0}^{\pi}(\omega^{m}-\sin^{m}\omega)''\sin((k+1)\omega)\,d\omega.
$$
It can be seen that for $m\geq 4$ the function $\psi(\omega):=(\omega^{m}-\sin^{m}\omega)''$ is non-negative and increasing on the interval $[0,\pi].$ Then partitioning  $[0,\pi]$ by $t_{j}=\frac{j\pi}{k+1}$ with $j=0,\dots, k+1$, we get
\begin{multline*}
\int_{0}^{\pi}\psi (\omega )\sin(k+1)\omega 	\, d\omega \\*[5pt] 
=\sum_{j=0}^{k} \int_{t_j}^{t_{j+1}}\psi (\omega )\sin(k+1)\omega 	\, d\omega > \sum_{j=1}^{k} \int_{t_j}^{t_{j+1}}\psi (\omega )\sin(k+1)\omega \, d\omega \\*[5pt]
= \sum_{j=2,\, \textrm{even}}^{k} \left(\int_{t_j}^{t_{j+1}}\psi (\omega )\sin(k+1)\omega \, d\omega  - \int_{t_{j-1}}^{t_j}\psi (\omega )|\sin(k+1)\omega |\, d\omega \right)\\*[5pt]
= \sum_{j=2,\, \textrm{even}}^{k} \int_{t_j}^{t_{j+1}}\left(\psi (\omega )-\psi \bigl(\omega -{\pi\over k+1}\bigr)\right)\sin(k+1)\omega \, d\omega  > 0,
\end{multline*}
so that $\beta_{k}-\beta_{k+2}>0$ for all $k\geq 2$. Thus, in order to see that $\beta_{k}\leq 0$  it is enough to show that $\beta_{2}\leq 0$, with
$$
\beta_{2}=
\int_{0}^{\pi}(\omega^{m}-\sin^{m}\omega)'\left(\frac{1}{1-\cos\omega}+2\cos\omega\right)\,d\omega.
$$
For simplicity in the exposition we write $h_{m}(\omega )=\omega ^{m-1}-\sin^{m-1}\omega  \cos \omega $ and $\displaystyle g(\omega )={1\over 1-\cos \omega }+2\cos \omega .$ The function $g$ has exactly one root in the interval $[0,\pi]$, $\zeta=\arccos ((1-\sqrt{3})/2).$

Notice that 
$$
h_{m}(\omega )g(\omega )=\omega ^{m-1}g(\omega )-\sin^{m-1}\omega \cos \omega \, g(\omega )\leq \omega ^{m-1}g(\omega ) +1
$$ 
for $0\leq \omega \leq \pi.$ Then, in order to see that $\beta_{2}<0$ it suffices to prove that $\int_{0}^{\pi}\omega ^{m-1}g(\omega )\,d\omega\leq -\pi$. First we see that the sequence $\int_{0}^{\pi}\omega ^{m-1}g(\omega )\,d\omega$ decreases with $m$, that is 
$$
\int_{0}^{\pi}(\omega ^{m}-\omega ^{m-1})g(\omega )\, d\omega =\int_{0}^{\pi}\omega ^{m-1}(\omega -1)g(\omega )\, d\omega 	\leq 0.
$$
The integrand is positive if $\omega \in [1,\zeta]$ and negative otherwise. Therefore if we see that 
$$
\int_{1}^{\zeta}\omega ^{m}(\omega -1)g(\omega )\, d\omega < \int_{\zeta}^{\pi}\omega ^{m-1}(\omega -1)|g(\omega )|\, d\omega 
$$ 
we are done. 

On the one hand $\int_{1}^{\zeta}\omega ^{m-1}(\omega -1)g(\omega )\, d\omega < \zeta^{m-1}(\zeta-1)\int_{1}^{\zeta}g(\omega )\, d\omega $. On the other hand 
$$
\int_{\zeta}^{\pi}\!\!\omega ^{m-1}(\omega -1)|g(\omega )|\!>\!\!\int_{\zeta+\varepsilon}^{\pi}\!\!\omega ^{m-1}(\omega -1)|g(\omega )|\, d\omega > (\zeta+\epsilon)^{m-1}(\zeta+\epsilon -1)\int_{\zeta+\epsilon}^{\pi}\!|g(\omega )|\, d\omega 
$$
for  $0<\epsilon<\pi-\zeta.$ Considering $\epsilon=1$ and using the fact that the function $2\sin \omega -\cot(\omega /2)$ is an antiderivative of $g$, a simple computation shows that
$$
\zeta^{m-1}(\zeta-1)\int_{1}^{\zeta}g(\omega )\, d\omega  < 
(\zeta+1)^{m-1}\zeta\int_{\zeta+1}^{\pi}|g(\omega )|\, d\omega 
$$
and the sequence $\int_{0}^{\pi}\omega ^{m-1}g(\omega )\,d\omega$ decreases with $m$. Since
$
\int_{0}^{\pi}\omega ^{2}g(\omega )\,d\omega <-\pi
$
we have proved that $\beta_{k}\leq \beta_{2}<0$. This gives us the upper bound
$$
\int_{P\notin K}(\omega^{m}-\sin^{m}\omega)\,dP\leq
-\pi^{m}F+M_{m}\frac{L^{2}}{2\pi}, \quad m\geq 4.
$$ 
As the cases $m=1,2,3$ are already known, this finishes the proof of the inequality in the theorem. Finally, since   the $\beta_{k}$ are not zero,  equality holds only when $c_{k}=0$ for $k\geq 2$, that is, when $\partial K$ is a circle.
\end{proof}

\subsection{Lower bounds}
For the case of constant width we have

\begin{teorema}\label{ctwidth2}
Let $K$ be a compact convex set of constant width, with boundary of class $\mathcal{C}^{2}$, of area $F$ and length of the boundary $L$, and let $\omega=\omega(P)$ be the visual angle from the point $P$. Then
$$
\int_{P\notin K}(\omega^{m}-\sin^{m}\omega)\,dP\geq -\pi^{m}F+M_{m}\frac{L^{2}}{2\pi}-\frac{\pi^{m-1}}{4}{\textstyle \left(1-\left(\frac{3}{4}\right)^{m}\right)}\Delta\geq 0,
$$
where $\Delta=L^{2}-4\pi F$ is the isoperimetric deficit. The first inequality becomes an equality only for circles.
\end{teorema}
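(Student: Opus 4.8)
The plan is to read off both inequalities from \eqref{ppp}. For a set $K$ of constant width all $c_{k}$ with $k$ even vanish, so \eqref{ppp} becomes
\[
\int_{P\notin K}(\omega^{m}-\sin^{m}\omega)\,dP=-\pi^{m}F+M_{m}\frac{L^{2}}{2\pi}+\pi\!\!\sum_{k\geq 3,\ \mathrm{odd}}\!\!\beta_{k}c_{k}^{2},
\]
and by \eqref{betask} (for $k$ odd) $\beta_{k}=-2m\sum_{j\ \mathrm{even},\,2\leq j\leq k-1}jV_{m-1,j}$, where $V_{r,j}=\int_{0}^{\pi}\omega^{r}\cos(j\omega)\,d\omega\geq 0$ for $j$ even (as checked in the proof of Theorem~\ref{upperbound}). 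Recalling $\Delta=L^{2}-4\pi F=2\pi^{2}\sum_{k\geq 2}(k^{2}-1)c_{k}^{2}$ (from the expression of $F$ in terms of the $c_{k}$), a sum over odd $k$ only here, together with $\sum_{j\ \mathrm{even},\,2\leq j\leq k-1}j=\tfrac14(k^{2}-1)$, the first inequality reduces---since $c_{k}^{2}\geq 0$---to the single universal estimate
\[
V_{m-1,j}\leq\frac{\pi^{m}}{m}\Bigl(1-\bigl(\tfrac34\bigr)^{m}\Bigr)\qquad\text{for every even }j\geq 2 .
\]

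To establish this I would set $z_{j}=\pi\bigl(1-\tfrac1{2j}\bigr)$, the largest zero of $\cos(j\omega)$ in $[0,\pi)$, and note $z_{j}\geq\tfrac{3\pi}{4}$. Splitting $V_{m-1,j}=\int_{0}^{z_{j}}+\int_{z_{j}}^{\pi}$, the tail is bounded trivially by $\int_{z_{j}}^{\pi}\omega^{m-1}\,d\omega=\tfrac1m(\pi^{m}-z_{j}^{m})\leq\tfrac1m\bigl(\pi^{m}-(3\pi/4)^{m}\bigr)$, so it remains to check $\int_{0}^{z_{j}}\omega^{m-1}\cos(j\omega)\,d\omega\leq 0$. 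For this, decompose $[0,z_{j}]$ at the zeros of $\cos(j\omega)$ (the odd multiples of $\tfrac{\pi}{2j}$) and pair the initial half-interval $[0,\tfrac{\pi}{2j}]$, and then each subsequent interval on which $\cos(j\omega)\geq 0$, with the interval immediately to its right, on which $\cos(j\omega)\leq 0$; since $j$ is even these pieces match up exactly. Bounding $\omega^{m-1}$ on each pair by its value at the common endpoint and using that $\cos(j\omega)$ integrates to $\pm\tfrac2j$ over a full sign-interval and to $\tfrac1j$ over $[0,\tfrac{\pi}{2j}]$, one finds the first pair contributes at most $-(\tfrac{\pi}{2j})^{m-1}/j<0$ and every later pair at most $0$; hence $\int_{0}^{z_{j}}\omega^{m-1}\cos(j\omega)\,d\omega<0$, and the estimate follows with strict inequality. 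This sign-cancellation is the step I expect to demand the most care---the pairing must leave no ``positive'' interval unmatched---and it is the choice $z_{j}=\pi(1-\tfrac1{2j})$, rather than $\tfrac{3\pi}{4}$ itself, that makes the parity work uniformly in the even parameter $j$.

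For the second inequality I would rewrite the middle expression, using $\Delta=L^{2}-4\pi F$, as a linear combination of $L^{2}$ and $F$: its coefficient of $F$ is $-(3\pi/4)^{m}$ and its coefficient of $L^{2}$ equals $\tfrac1{2\pi}\bigl(M_{m}-\tfrac{\pi^{m}}{2}(1-(3/4)^{m})\bigr)$. Hence it suffices to prove $M_{m}>\tfrac{\pi^{m}}{2}$: then that coefficient exceeds $\tfrac{(3\pi/4)^{m}}{4\pi}$, and the isoperimetric inequality $L^{2}\geq 4\pi F$ makes the middle expression $\geq\bigl(4\pi\cdot\tfrac{(3\pi/4)^{m}}{4\pi}-(3\pi/4)^{m}\bigr)F=0$. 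To obtain $M_{m}>\tfrac{\pi^{m}}{2}$, write $\tfrac{\pi^{m}}{2}=\int_{0}^{\pi}\frac{m\omega^{m-1}(1-\cos\omega)}{2(1-\cos\omega)}\,d\omega$ and subtract it from $M_{m}=\int_{0}^{\pi}\frac{m\omega^{m-1}-m\sin^{m-1}\omega\cos\omega}{1-\cos\omega}\,d\omega$:
\[
M_{m}-\frac{\pi^{m}}{2}=\frac{m}{2}\int_{0}^{\pi}\frac{\omega^{m-1}(1+\cos\omega)-2\sin^{m-1}\omega\cos\omega}{1-\cos\omega}\,d\omega ;
\]
the integrand is non-negative on $(0,\pi)$---for $\omega\leq\pi/2$ because $\omega^{m-1}\geq\sin^{m-1}\omega$ and $1+\cos\omega\geq 2\cos\omega\geq 0$, and for $\omega\geq\pi/2$ because both terms of the numerator are then non-negative---and strictly positive on $(\pi/2,\pi)$, so the integral is positive. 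Finally, equality in the first inequality forces $c_{k}^{2}\bigl(\beta_{k}+\tfrac{\pi^{m}}{2}(1-(3/4)^{m})(k^{2}-1)\bigr)=0$ for every odd $k\geq 3$, and the strict form of the universal estimate makes each bracket positive; so all $c_{k}$ with $k\geq 2$ vanish and $K$ is a disk, whereas a disk has $\Delta=0$ and all $c_{k}=0$, so equality holds only for circles.
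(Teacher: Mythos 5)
Your proposal is correct and follows essentially the same route as the paper: for constant width only odd $k$ contribute, you bound $\beta_{k}$ through the universal estimate $\int_{0}^{\pi}\omega^{m-1}\cos(j\omega)\,d\omega\leq\frac{\pi^{m}}{m}\bigl(1-(3/4)^{m}\bigr)$ for even $j$, sum with $\sum_{j\ \mathrm{even}}^{k-1}j=\frac{k^{2}-1}{4}$ and $\Delta=2\pi^{2}\sum_{k\geq 2}(k^{2}-1)c_{k}^{2}$, and obtain the second inequality from $M_{m}\geq\pi^{m}/2$ plus the isoperimetric inequality, with the equality case settled by strictness of the estimate. The only deviations are in two auxiliary steps, where you are actually more explicit than the paper: you justify the negativity of $\int_{0}^{\pi-\pi/(2j)}\omega^{m-1}\cos(j\omega)\,d\omega$ by an interval-pairing argument (the paper asserts the corresponding strict inequality without proof), and you prove $M_{m}>\pi^{m}/2$ directly from the integral defining $M_{m}$, whereas the paper deduces $M_{m}\geq \pi^{m}/2$ by applying \eqref{ppp} to a circle.
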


\begin{proof}
In the constant width case it is $c_{k}=0$ for $k$ even and the only contribution in $\beta_{k}$ comes from $\omega^{m}$ when $k$ is odd.
Therefore since
$$
\int_{P\notin K}(\omega^{m}-\sin^{m}\omega)\,dP= -\pi^{m}F+M_{m}\frac{L^{2}}{2\pi}+\pi\sum_{k>2, \,\textrm{odd}}\beta_{k}c_{k}^{2}
$$
an upper bound for the positive quantity $K_{m}=-\pi\sum_{k>2,\, \textrm{odd}}\beta_{k}c_{k}^{2}$ will give a lower bound for $\int_{P\notin K}(\omega^{m}-\sin^{m}\omega)\,dP$.
Using \eqref{betask} we have  
$$
K_{m}=2\pi m\sum_{k>2,\, \textrm{odd}}\left(\sum_{j=2,\,\textrm{even}}^{k-1}j\int_{0}^{\pi}\omega ^{m-1}\cos(j\omega )\,d\omega \right)c_{k}^{2}.
$$
The following estimate holds
$$
\int_{0}^{\pi}\omega ^{m-1}\cos(j\omega )\,d\omega <\! \int_{\pi-{\pi\over 2j}}^{\pi}\omega ^{m-1}\cos(j\omega )\,d\omega \leq\! \int_{\pi-{\pi\over 2j}}^{\pi}\omega ^{m-1}\,d\omega ={\pi^{m}\over m}{\textstyle \left(1-\left(\frac{3}{4}\right)^{m}\right)}.
$$
Moreover  $\sum_{j=2,\,\textrm{even}}j=\frac14(k^{2}-1),$ so that
\begin{equation}\label{keme}
K_{m}\leq {\frac{\pi^{m+1}}{2}}{\textstyle \left(1-\left(\frac{3}{4}\right)^{m}\right)}\sum_{k>2,\,\textrm{odd}}(k^{2}-1)c_{k}^{2}.
\end{equation}
As $\Delta=2\pi^{2}\sum_{k\geq 2}(k^{2}-1)c_{k}^{2}$ (cf.\ \cite{CufiRev}) we have 
$$
K_{m}\leq {\pi^{m-1}\over 4}{\textstyle \left(1-\left(\frac{3}{4}\right)^{m}\right)}\Delta ,
$$
that gives the desired lower bound.

From formula \eqref{ppp} applied to a circle it follows that   $M_{m}\geq \pi^{m}/2$ 
and so  
we get 
\begin{equation*}
\begin{split}
\int_{P\notin K}(\omega^{m}-\sin^{m}\omega)\,dP&\geq -\pi^{m}F+M_{m}\frac{L^{2}}{2\pi}-{\pi^{m-1}\over 4}{\textstyle \left(1-\left(\frac{3}{4}\right)^{m}\right)}\Delta\\*[5pt]
&\geq 
{\pi^{m-1}\over 4}\left(\frac34\right)^{m}\Delta \geq 0.
\end{split}
\end{equation*}
Finally, if some $c_{k}\not =0$ we have strict inequality in \eqref{keme}, hence the first inequality becomes an equality only for circles.
\end{proof}

\bibliographystyle{smfplain}

\begin{thebibliography}{10}

\bibitem{crofton}
{\scshape M.~W. Crofton} -- {\og On the theory of local probability\fg},
  \emph{Phil.\ Trans.\ R.\ Soc.\ Lond.} \textbf{158} (1868), p.~181--199.

\bibitem{CufiRev}
{\scshape J.~Cuf{\'{i}} {\normalfont \smfandname} A.~Revent{\'{o}}s} -- {\og {A
  lower bound for the isoperimetric deficit}\fg}, \emph{Elemente der
  Mathematik} \textbf{71} (2016), no.~4, p.~156--167.

\bibitem{GAUSS}
{\scshape C.~F. Gauss} -- \emph{{Carl Friedrich Gauss Werke}}, vol. 1-12,
  K\"oniglichen Gesellschaft der Wissenschaften zu G\"ottingen, B. G. Teubner,
  Leipzig, 1870-1927, {\em Disquisitiones generales circa series infinita}.

\bibitem{grads}
{\scshape I.~S. Gradshteyn {\normalfont \smfandname} I.~M. Ryzhik} --
  \emph{Table of integrals, series, and products}, Academic Press, New
  York-London-Toronto, Ont., 1980.

\bibitem{groemer}
{\scshape H.~Groemer} -- \emph{Geometric applications of {F}ourier series and
  spherical harmonics}, Encyclopedia of Mathematics and its Applications,
  vol.~61, Cambridge University Press, 1996.

\bibitem{Hurwitz1902}
{\scshape A.~Hurwitz} -- {\og {Sur quelques applications geometriques des
  s{\'{e}}ries de Fourier}\fg}, \emph{Annales scientifiques de l'{\'{E}}.N.S.,
  3{\`{e}}me s{\'{e}}rie} \textbf{19} (1902), p.~357--408.

\bibitem{masotti2}
{\scshape G.~Masotti} -- {\og La {G}eometria {I}ntegrale\fg}, \emph{Rend. Sem.
  Mat. Fis. Milano} \textbf{25} (1955), p.~164--231.

\bibitem{masotti}
\bysame , {\og {Sulla Geometria Integrale: Generalizzazione Di Formiule Di
  Crofton, Lebesgue E Santalo}\fg}, \emph{Revista de la Uni{\'{o}}n
  Matem{\'{a}}tica Argentina} \textbf{17} (1955), p.~125--134.

\bibitem{santalo}
{\scshape L.~Santal\'o} -- \emph{Integral geometry and geometric probability},
  second \smfedname, Cambridge University Press, 2004.

\bibitem{Schneider2013}
{\scshape R.~Schneider} -- \emph{{Convex bodies: the Brunn--Minkowski theory
  (second expanded edition)}}, Cambridge University Press, 2013.

\end{thebibliography}

\providecommand{\bysame}{\leavevmode ---\ }
\providecommand{\og}{``}
\providecommand{\fg}{''}
\providecommand{\smfandname}{\&}
\providecommand{\smfedsname}{\'eds.}
\providecommand{\smfedname}{\'ed.}
\providecommand{\smfmastersthesisname}{M\'emoire}
\providecommand{\smfphdthesisname}{Th\`ese}

\end{document}